\DeclareRobustCommand*{\bfseries}{%
  \not@math@alphabet\bfseries\mathbf
  \fontseries\bfdefault\selectfont
  \boldmath
}
\newtheorem{theo}{Theorem}[section]
\newtheorem{lemma}[theo]{Lemma}
\newtheorem{prop}[theo]{Proposition}
\newtheorem{cor}[theo]{Corollary}
\newtheorem{remark}[theo]{Remark}
\newtheorem{example}[theo]{Example}
\numberwithin{equation}{section}
\mathchardef\mhyphen="2D
\def\bL{\mathbb{L}}
\def\Z{\mathbb{Z}}
\def\wt{\widetilde}
\def\B{{\mathcal B}}
\def\bL{{\mathbf L}}
\def\pre-tr{\operatorname{pre-tr}}
\def\h{\operatorname{h}}
\def\Hom{\operatorname{Hom}}
\newcommand{\tens}[1]{%
  \mathbin{\mathop{\otimes}\displaylimits_{#1}}%
}
\newcommand{\laxtimes}[1]{\mathop{\times\mkern-13mu\raise1.3ex\hbox{$\scriptscriptstyle\to$}_{#1}}}
\newcommand{\hy}{\mhyphen}
\newcommand{\indlim}[1][]{\mathop{\varinjlim}\limits_{#1}}
\newcommand{\inddlim}[1][]{{``{\indlim[#1]}"}}
\newcommand{\prolim}[1][]{\mathop{\varprojlim}\limits_{#1}}
\newcommand{\proolim}[1][]{{``{\prolim[#1]}"}}
\newcommand{\bbar}{\overline}
\newcommand{\xto}{\xrightarrow}
\newcommand{\hto}{\hookrightarrow}
\newcommand{\onto}{\twoheadrightarrow}
\newcommand{\lex}{\operatorname{lex}}
\newcommand{\rex}{\operatorname{rex}}
\newcommand{\Coh}{\operatorname{Coh}}
\newcommand{\Cone}{\operatorname{Cone}}
\newcommand{\Fiber}{\operatorname{Fiber}}
\newcommand{\cF}{{\mathcal F}}
\newcommand{\cL}{{\mathcal L}}
\newcommand{\cA}{{\mathcal A}}
\newcommand{\cB}{{\mathcal B}}
\newcommand{\cC}{{\mathcal C}}
\newcommand{\cE}{{\mathcal E}}
\newcommand{\cT}{{\mathcal T}}
\newcommand{\cH}{{\mathcal H}}
\newcommand{\la}{\langle}
\newcommand{\ra}{\rangle}
\newcommand{\incl}{\operatorname{incl}}
\newcommand{\Fun}{\operatorname{Fun}}
\newcommand{\Ab}{\operatorname{Ab}}
\newcommand{\Perf}{\operatorname{Perf}}
\newcommand{\Shv}{\operatorname{Shv}}
\newcommand{\coker}{\operatorname{coker}}
\newcommand{\im}{\operatorname{Im}}
\newcommand{\add}{\operatorname{add}}
\newcommand{\Ind}{\operatorname{Ind}}
\newcommand{\Pro}{\operatorname{Pro}}
\newcommand{\id}{\operatorname{id}}
\title[Some remarks on Quillen's D\'evissage theorem]
{Some remarks on Quillen's D\'evissage theorem}
\author{Alexander I. Efimov}
\address{Steklov Mathematical Institute of RAS, Gubkin St. 8, GSP-1, Moscow 119991, Russia}
\email{efimov@mccme.ru}
\begin{document}

\begin{abstract} In this paper we give a different proof of Quillen's D\'evissage theorem using Barwick's theorem of the heart. The key ingredient is a certain short exact sequence of dg categories, which is closely related with the Auslander-type construction for nilpotent extensions which was used in the papers of Kuznetsov-Lunts \cite{KL15}, Land-Tamme \cite{LT19} and the author \cite{E20}.
\end{abstract}


\maketitle

\tableofcontents

\section{Introduction}
\label{sec:intro}

We recall the following classical result of Quillen.

\begin{theo}\label{th:Quillen} \cite[Theorem 4]{Qui}
Let $\cA$ be a small abelian category, and let $\cB\subset\cA$ be a strictly full abelian subcategory which is closed under taking subobjects and quotients. Suppose that each object of $\cA$ has a finite filtration with subquotients in $\cB.$ Then we have isomorphisms $K_n(\cB)\xto{\sim}K_n(\cA)$ for $n\geq 0.$
\end{theo}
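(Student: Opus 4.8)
The plan is to deduce the theorem from two ingredients: Barwick's theorem of the heart, which computes the algebraic $K$-theory of a pretriangulated dg category equipped with a bounded $t$-structure as the Quillen $K$-theory of its heart; and an \emph{Auslander-type abelian category} $\widetilde{\cA}$ attached to the pair $\cB\subseteq\cA$, built in the style of the Auslander algebras of \cite{KL15,LT19,E20}, which will sit in a short exact sequence of dg categories interpolating between $\cB$ and $\cA$ while having $K$-theory computable by additivity.

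First I would reduce to the case of two-step filtrations. Since algebraic $K$-theory commutes with filtered colimits of small exact categories and $\cA=\bigcup_{n\ge 1}\cA_{\le n}$, where $\cA_{\le n}\subseteq\cA$ is the full subcategory of objects admitting a filtration of length $\le n$ with subquotients in $\cB$, it suffices to show that each inclusion $\cA_{\le n-1}\hookrightarrow\cA_{\le n}$ induces an equivalence on $K$-theory. Here each $\cA_{\le n}$ is again a small abelian category (it is closed in $\cA$ under kernels, cokernels and finite direct sums), $\cA_{\le n-1}$ is a strictly full abelian subcategory closed under subobjects and quotients, and every object of $\cA_{\le n}$ admits a two-step filtration $0\subseteq A'\subseteq A$ with $A'\in\cA_{\le n-1}$ and $A/A'\in\cB\subseteq\cA_{\le n-1}$. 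As $\cA_{\le 1}=\cB$, the general statement follows from the two-step case --- the abelian analogue of a square-zero extension.

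For a two-step pair $\cB\subseteq\cA$ the plan is to construct a small abelian category $\widetilde{\cA}$ --- roughly, modules over a triangular ``matrix category'' with $\cB$-coefficients, twisted by the extension data --- together with: (a) an exact fully faithful embedding of $\cB$ onto a Serre subcategory of $\widetilde{\cA}$, the essential gain being that the extensions of $\cB$-objects which leave $\cB$ inside $\cA$ get resolved in $\widetilde{\cA}$, so that $\cB$ is extension-closed there, and an identification of the Serre quotient $\widetilde{\cA}/\cB$ with $\cA$; and (b) a directed decomposition of $\widetilde{\cA}$ out of two copies of $\cB$. Granting this, the argument runs as follows. By (a), the full subcategory $\bD^b_{\cB}(\widetilde{\cA})\subseteq\bD^b(\widetilde{\cA})$ of complexes with cohomology in $\cB$ is pretriangulated and carries the induced bounded $t$-structure with heart $\cB$, so Barwick's theorem of the heart gives $K(\bD^b_{\cB}(\widetilde{\cA}))\simeq K(\cB)$; and the identification $\widetilde{\cA}/\cB\simeq\cA$ produces the short exact sequence of dg categories
\[
  \bD^b_{\cB}(\widetilde{\cA})\lto\bD^b(\widetilde{\cA})\lto\bD^b(\cA),
\]
whence a fiber sequence $K(\cB)\to K(\widetilde{\cA})\to K(\cA)$ in which the first map is split injective, onto one summand of the decomposition $K(\widetilde{\cA})\simeq K(\cB)\oplus K(\cB)$ coming from (b) and the additivity theorem, while the composite of the other summand inclusion with $K(\widetilde{\cA})\to K(\cA)$ is the map induced by $\cB\hookrightarrow\cA$. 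Passing to cofibers yields $K(\cA)\simeq K(\cB)$, inverse to the d\'evissage map; together with the reduction of the previous paragraph this gives $K_n(\cB)\xto{\sim}K_n(\cA)$ for $n\ge 0$.

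The hard part will be the construction of $\widetilde{\cA}$ itself and the verification of (a) and (b): one must exhibit an abelian category in which $\cB$ is a genuine Serre --- hence extension-closed --- subcategory, show that the hypothesis that every object of $\cA$ admits a two-step $\cB$-filtration is exactly what makes the Serre quotient equal to $\cA$ and the derived localization sequence valid, and verify that the two descriptions of $\widetilde{\cA}$ --- as an extension of $\cA$ by $\cB$, and as a directed gluing of two copies of $\cB$ --- are compatible in $K$-theory via explicit functors. The remaining points, namely that $K$-theory commutes with the colimit $\cA=\bigcup_n\cA_{\le n}$, that $\bD^b_{\cB}(\widetilde{\cA})$ is idempotent complete, and that a Serre subcategory of the required type does yield the displayed short exact sequence of dg categories, should be routine once $\widetilde{\cA}$ is in hand.
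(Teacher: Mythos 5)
Your strategy coincides with the paper's: reduce to two-step filtrations using that $K$-theory commutes with filtered colimits, build an Auslander-type category that on one hand is glued from two copies of $\cB$ (so that additivity gives $K(\cB)\oplus K(\cB)$) and on the other hand receives $\cB$ as a Serre subcategory with quotient $\cA$ (so that localization gives a fiber sequence), and apply Barwick's theorem of the heart to the fiber term. The $K$-theoretic bookkeeping is right in outline, with two small imprecisions: in the paper's model the composite $K(\cB)\to K(\widetilde{\cA})\simeq K(\cB)\oplus K(\cB)$ is $(-\id,\id)$ rather than the inclusion of a summand (this is harmless, since it is still split injective with cofiber $K(\cB)$ mapping to $K(\cA)$ via the inclusion); and ``passing to cofibers'' at the level of connective spectra silently uses surjectivity at the bottom of the long exact sequence, which the paper sidesteps by proving the $K_0$ statement by an elementary direct argument and using the localization sequence only for $n\geq 1$.

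The genuine gap is that $\widetilde{\cA}$, the object carrying the entire proof, is never constructed, and you correctly flag this as the hard part --- but it is not a part that can be deferred, because for a general pair $\cB\subset\cA$ there is no ring and no evident ``triangular matrix category with $\cB$-coefficients.'' The paper's solution is to start not with an abelian category but with the \emph{quasi-abelian} category $\cE_{\cA,\cB}$ of pairs $(X,Y)$ with $Y\subset X$ and $Y,X/Y\in\cB$: the two copies of $\cB$ are $X\mapsto(X,0)$ and $X\mapsto(X,X)$, and the fiber of $D^b(\cE_{\cA,\cB})\to D^b(\cA)$ acquires a bounded $t$-structure with heart $\cB$ via $X\mapsto\Cone((X,0)\to(X,X))$. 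Your $\widetilde{\cA}$ is then the right abelian envelope $\cL\cH(\cE_{\cA,\cB})$, and identifying it concretely --- as a category $\cC_{\cA,\cB}$ of quadruples $(X,Y,u,v)$ with $u:i(Y)\to X$ and $v:\wt{j}(X)\to Y$ --- requires an ingredient absent from your sketch: a right-exact replacement $\wt{j}$ of the $\Pro(\cB)$-valued functor $j$ sending $X$ to the pro-object of subobjects $Y\subset X$ with $Y,X/Y\in\cB$, produced by sheafifying for the single-epi topology on $\cA$. Without this (or an equivalent device) one cannot verify your conditions (a) and (b) for one and the same abelian category, i.e.\ that $\cB$ sits as a Serre subcategory with quotient $\cA$ while the ambient category is simultaneously a directed gluing of two copies of $\cB$; so the proposal, though correctly structured, is missing its central construction.
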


Another important result which is due to Barwick is about $K$-theory of stable $\infty$-categories with bounded $t$-structures.

\begin{theo}\label{th:Barwick} \cite[Theorem 6.1]{Bar15}
Let $\cT$ be a small stable $\infty$-category with a bounded $t$-structure, i.e. $\cT$ is generated as a stable subcategory by the heart $\cT^{\heartsuit}.$ Then we have isomorphisms $K_n(\cT^{\heartsuit})\xto{\sim} K_n(\cT)$ for $n\geq 0.$
\end{theo}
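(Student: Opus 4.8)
The statement is Barwick's \emph{theorem of the heart}, and the plan I would follow is close in spirit to \cite{Bar15} (an $\infty$-categorical incarnation of Neeman's triangulated theorem of the heart). The first move is to cut $\cT$ into bounded pieces. Viewing $\cT$ with its canonical exact structure, in which all cofiber sequences are exact, $K(\cT)$ is the Waldhausen K-theory of that structure; for integers $a\le 0\le b$ the full subcategory $\cT_{[a,b]}\subseteq\cT$ of objects with cohomology concentrated in degrees $[a,b]$ is closed under extensions, hence an exact $\infty$-category, with $\cT_{[0,0]}=\cT^{\heartsuit}$ and $\cT=\bigcup_n\cT_{[-n,n]}$ compatibly with exact structures. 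Since $K$ commutes with filtered colimits of exact $\infty$-categories, it suffices to prove that each inclusion $\cT^{\heartsuit}\hookrightarrow\cT_{[a,b]}$ is a K-equivalence. The case $b=a$ records that the K-theory of $\cT^{\heartsuit}$ as an exact $\infty$-category, whose exact sequences are exactly the short exact sequences of the abelian category $\cT^{\heartsuit}$ (one reads this off the long exact cohomology sequence), coincides with Quillen's K-theory of that abelian category; this is the classical comparison of the $S_\bullet$- and $Q$-constructions. One then proceeds by induction on $b-a$.

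For the inductive step one wants to strip off an extreme cohomology degree, say the top one: the target is a fiber sequence of the form
\[
K(\cT_{[a,b-1]})\lto K(\cT_{[a,b]})\lto K(\cT_{[a,b]},w),
\]
coming from a fibration theorem for a suitable coarser class of weak equivalences $w$ on $\cT_{[a,b]}$ whose acyclic objects are those of $\cT_{[a,b-1]}$, together with a proof that $K(\cT_{[a,b]},w)\simeq 0$. The vanishing should follow from the cofiber sequence of truncation endofunctors $\tau^{\le b-1}\to\id\to\tau^{\ge b}$ of $\cT_{[a,b]}$: relative to $w$ these should induce maps on $K(-,w)$, so additivity gives $\id_*=(\tau^{\le b-1})_*+(\tau^{\ge b})_*$; the functor $\tau^{\ge b}$ should be $w$-naturally trivial because $\tau^{\ge b}X=H^b(X)[-b]$ admits a $w$-equivalence to $0$ whose cone $H^b(X)[-b+1]$ lies (using $b-1\ge a$) back inside $\cT_{[a,b]}$, while $\tau^{\le b-1}$ factors through $\cT_{[a,b-1]}$, on which $w$ consists of all morphisms, so its K-theory with those weak equivalences vanishes. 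This would give $K(\cT_{[a,b-1]})\isomoto K(\cT_{[a,b]})$, and with the inductive hypothesis the theorem follows.

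The point where the real work lies — and the main obstacle I foresee — is making this inductive step precise, because the truncation functors are \emph{not} exact for the cofiber-sequence structure on $\cT_{[a,b]}$ (their failure to preserve a cofiber sequence is an error supported in degree $b-1$), and the obvious candidates for $w$, such as $\{f:H^b(f)\text{ invertible}\}$ or $\{f:H^b(\Cone(f))=0\}$, fail the saturation axiom or the extension axiom demanded by the fibration theorem. Overcoming this is exactly the technical heart of Neeman's and Barwick's proofs: one must choose the Waldhausen structure — and, most likely, enlarge $\cT_{[a,b]}$ to a category of short filtered objects where there is more room — so that the truncations become exact up to the chosen weak equivalences and the fibration and additivity theorems genuinely apply, and one must verify the remaining hypotheses (extension axiom, existence of a cylinder functor). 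All of this is carried out cleanly inside Barwick's formalism of Waldhausen $\infty$-categories, which I would take as the technical setting for the whole argument.
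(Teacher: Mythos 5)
The paper does not prove this statement at all: it is Barwick's theorem of the heart, quoted from \cite[Theorem 6.1]{Bar15} and used as a black box (together with Theorem \ref{th:key_construction_intro}) to rederive Quillen's d\'evissage theorem. So there is no internal proof to compare yours against; the only fair benchmark is Barwick's own argument.

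Measured against that, your outline correctly assembles the standard preliminary reductions --- exhaust $\cT$ by the full subcategories $\cT_{[a,b]}$ of bounded amplitude, invoke commutation of $K$ with filtered colimits, and identify the $K$-theory of the heart, with its induced exact structure, with Quillen's $K$-theory of the abelian category $\cT^{\heartsuit}$ --- and it proposes the Neeman-style plan of peeling off one cohomological degree at a time via a fibration theorem plus additivity. But, as you yourself say, all of the substance is concentrated in the inductive step, and that step is left open: you do not exhibit a class of weak equivalences $w$ on $\cT_{[a,b]}$ (or on an enlargement by filtered objects) for which simultaneously (i) the fibration theorem applies with acyclics exactly $\cT_{[a,b-1]}$, (ii) the truncations become exact up to $w$-equivalence so that additivity really yields $\id_*=(\tau^{\le b-1})_*+(\tau^{\ge b})_*$, and (iii) $(\tau^{\ge b})_*$ is null. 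You correctly diagnose that the naive choices of $w$ fail the saturation or extension axioms and that the truncation functors are not exact for the cofiber-sequence structure; repairing this is precisely the content of the published proofs, and it is not a routine verification (Barwick, for what it is worth, does not run an induction on amplitude but factors the comparison as $\cT^{\heartsuit}\subset\cT^{\ge 0}\subset\cT$ and treats the two inclusions by separate arguments within his framework of exact $\infty$-categories). As it stands, the proposal is an accurate roadmap with the decisive bridge missing, so it cannot be accepted as a proof of the theorem; given that the paper itself only cites the result, the appropriate course is to do the same.
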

 
Both results are extremely useful for computations of $K$-theory. The original proof of Theorem \ref{th:Quillen} in \cite{Qui} uses Quillen's Theorem A and the $Q$-construction, and the argument is quite mysterious from the categorical perspective. On the other hand, Barwick's proof of Theorem \ref{th:Barwick} uses the machinery of exact $\infty$-categories as defined in \cite{Bar15}, and the argument is much more direct. In this paper we show that in fact Theorem \ref{th:Barwick} implies Theorem \ref{th:Quillen}.

We now explain the main construction. Let $\cA\supset \cB$ be as in Theorem \ref{th:Quillen}. Suppose that moreover each object of $\cA$ is an extension of two objects of $\cB.$ Note that the general case reduces to this case.

Consider the category $\cE_{\cA,\cB}$ of pairs $(X,Y),$ where $X\in\cA$ and $Y\subset X$ is a subobject such that $Y\in\cB$ and $X/Y\in\cB.$ Then the category $\cE_{\cA,\cB}$ is quasi-abelian, in particular we can consider it as an exact category. We refer to Subsection \ref{ssec:quasi_abelian} for the definition of a quasi-abelian category. 

We prove the following result.

\begin{theo}\label{th:key_construction_intro}
Within the above notation, the following holds.
\begin{enumerate}[label=(\roman*), ref=(\roman*)]
	\item We have a natural semi-orthogonal decomposition
	\begin{equation*}
		D^b(\cE_{\cA,\cB})=\la D^b(\cB), D^b(\cB)\ra.
	\end{equation*}
	We denote by $\Phi_1,\Phi_2:D^b(\cB)\to D^b(\cE_{\cA,\cB})$ the inclusion functors for the $1$-st resp. $2$-nd component. We denote by $\Phi_1^L$ resp. $\Phi_2^R$ the left resp. right adjoint to $\Phi_1$ resp. $\Phi_2.$ \label{SOD}
	\item We have a short exact sequence of dg categories
	\begin{equation}\label{eq:key_ses}
		0\to \cT_{\cA,\cB}\xto{\Psi} D^b(\cE_{\cA,\cB})\xto{q} D^b(\cA)\to 0.
	\end{equation}
	Here the functor $q$ is induced by the (exact) forgetful functor $\cE_{\cA,\cB}\to \cA.$ The functor $\B\to \cT_{\cA,\cB},$ $X\mapsto\Cone((X,0)\to (X,X)),$ is fully faithful, and its essential image is the heart of a bounded $t$-structure on $\cT_{\cA,\cB}.$ \label{ses}
	\item Both compositions $q\circ \Phi_i,$ $i=1,2,$ are isomorphic to the derived functor of the inclusion $\cB\hto\cA.$ The compositions $\Phi_1^L[-1]\circ \Psi$ and $\Phi_2^R\circ \Psi$ are left inverses to the realization functor $D^b(\cB)\to \cT_{\cA,\cB}.$ \label{functors_on_components}
\end{enumerate}
\end{theo}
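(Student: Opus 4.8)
The plan is to build everything from the quasi-abelian category $\cE_{\cA,\cB}$ directly, establishing \ref{SOD} first, then \ref{ses}, then \ref{functors_on_components}, since the later parts refer to the functors introduced in the earlier ones. For \ref{SOD}, I would begin by identifying two natural exact functors $\cB\to\cE_{\cA,\cB}$: the functor $s_1\colon Y\mapsto (Y,0)$ (an object of $\cB$ viewed as an object of $\cA$ with the zero subobject) and the functor $s_2\colon Y\mapsto (Y,Y)$. Each is fully faithful and exact (hence extends to $D^b$), and one checks that the images generate $\cE_{\cA,\cB}$ under extensions — indeed $(X,Y)$ sits in a short exact sequence $0\to (Y,Y)\to (X,Y)\to (X/Y,0)\to 0$ in $\cE_{\cA,\cB}$. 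The semiorthogonality $\Hom_{D^b(\cE)}(\Phi_2(\B),\Phi_1(\B)[\ast])=0$ amounts to an $\Ext$-vanishing computation in the quasi-abelian category, which I expect to follow from the explicit description of morphisms and extensions of pairs; the key point is that $\Hom(s_2 Y, s_1 Y')=0$ on the nose and that higher $\Ext$'s between the two copies also vanish in one direction. The admissibility of the two subcategories (existence of the adjoints $\Phi_1^L$, $\Phi_2^R$) then follows formally from the SOD together with the projection/inclusion formalism, or can be exhibited by explicit formulas: $\Phi_2^R(X,Y)$ should be $Y$ and $\Phi_1^L(X,Y)$ should be related to $X/Y$.

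For \ref{ses}, I would define $\cT_{\cA,\cB}$ as the kernel of $q\colon D^b(\cE_{\cA,\cB})\to D^b(\cA)$, where $q$ is the derived forgetful functor $(X,Y)\mapsto X$. The first task is to show $q$ is a quotient functor (a Verdier localization up to idempotent completion, i.e. a short exact sequence of dg categories in the sense used by Land–Tamme), which I would do by exhibiting a section-up-to-summands or, more robustly, by checking that $q$ is essentially surjective (every object of $\cA$ is an extension of two objects of $\cB$, hence lifts to a pair) and that it identifies $D^b(\cA)$ with the Verdier quotient by $\ker q$ — the latter via a calculus-of-fractions argument using that $q\circ s_1 \simeq q\circ s_2 \simeq (\cB\hookrightarrow\cA)$ so the two lifts of a $\cB$-object become canonically identified after applying $q$. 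To identify $\cT_{\cA,\cB}$ and the $t$-structure on it, note that $\Cone((X,0)\to(X,X))$, for $(X,0)\mapsto X$ and $(X,X)\mapsto X$ both going to the \emph{same} object under $q$, lies in $\ker q$; writing $j_X := \Cone((X,0)\to (X,X))$, I would check $X\mapsto j_X$ is exact from $\cB$, fully faithful (a Hom-computation, using the SOD to reduce Homs in $\cE$ to Homs of the components), and that its image generates $\cT_{\cA,\cB}$ as a stable category and is the heart of a bounded $t$-structure — the generation and boundedness following from the SOD description of $D^b(\cE_{\cA,\cB})$ restricted to $\ker q$, since an object of $\ker q$ is built from $\Phi_1(\B)$ and $\Phi_2(\B)$ pieces whose $q$-images cancel.

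For \ref{functors_on_components}: $q\circ\Phi_i$ is computed directly from $q\circ s_i\simeq (\cB\hookrightarrow\cA)$ on objects, extended to $D^b$. For the statement that $\Phi_1^L[-1]\circ\Psi$ and $\Phi_2^R\circ\Psi$ are left inverses to the realization functor $r\colon D^b(\cB)\to\cT_{\cA,\cB}$ (which is the derived extension of $X\mapsto j_X$ together with the identification of $\B$ with the heart from \ref{ses}), the plan is to compute $\Phi_i^L$ or $\Phi_i^R$ applied to $j_X = \Cone((X,0)\to(X,X))$ using the triangle and the explicit values of the adjoints on $s_1$- and $s_2$-objects: one of $(X,0)$, $(X,X)$ lies in the relevant subcategory and is killed by the adjoint, leaving the other, shifted appropriately, which recovers $X$. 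The main obstacle I anticipate is the proof that $q$ is genuinely a quotient (short exact sequence) of dg categories with kernel exactly $\cT_{\cA,\cB}$ — i.e. controlling the Verdier quotient $D^b(\cE_{\cA,\cB})/\cT_{\cA,\cB}$ and showing it is $D^b(\cA)$ rather than merely receiving a functor from it; this is where the quasi-abelian structure on $\cE_{\cA,\cB}$ and a careful analysis of which morphisms become invertible must be used, likely via a dévissage-free argument comparing $t$-structures on the quotient with the standard one on $D^b(\cA)$, or via the Auslander-type resolution techniques referenced from \cite{KL15}, \cite{LT19}, \cite{E20}.
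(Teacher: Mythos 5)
Your treatment of parts (i) and (iii) matches the paper's: the same functors $\Phi_1(X)=(X,0)$, $\Phi_2(X)=(X,X)$ with exact adjoints $\Phi_1^L(X,Y)=X/Y$, $\Phi_2^R(X,Y)=Y$, the same functorial short exact sequence $0\to(Y,Y)\to(X,Y)\to(X/Y,0)\to 0$, and the same computation of the adjoints on $\Cone((X,0)\to(X,X))$. The problem is part (ii), which is the entire technical content of the theorem, and which you do not prove: you correctly identify as ``the main obstacle'' the claim that $q$ realizes $D^b(\cA)$ as the Verdier quotient $D^b(\cE_{\cA,\cB})/\ker q$, but you only list candidate strategies (a calculus-of-fractions argument, a comparison of $t$-structures, ``Auslander-type resolution techniques'') without carrying any of them out. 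Essential surjectivity of $q$ plus the identification $q\circ\Phi_1\simeq q\circ\Phi_2$ is far from sufficient: one must show the induced functor from the quotient is fully faithful, and for a quasi-abelian (rather than abelian) source there is no off-the-shelf localization theorem to invoke.

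The paper's resolution, which is missing from your proposal, is to replace $\cE_{\cA,\cB}$ by its right abelian envelope. By Schneiders \cite{Sch99}, $D^b(\cE_{\cA,\cB})\simeq D^b(\cL\cH(\cE_{\cA,\cB}))$, and the paper computes $\cL\cH(\cE_{\cA,\cB})$ explicitly as an abelian category $\cC_{\cA,\cB}$ of quadruples $(X,Y,u,v)$ with $u:i(Y)\to X$ and $v:\wt{j}(X)\to Y$, where $\wt{j}$ is the right-exactification (via sheafification for the single epi topology) of the pro-object-valued functor $X\mapsto\proolim Y$ over subobjects $Y\subset X$ with $Y,X/Y\in\cB$. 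Inside $\cC_{\cA,\cB}$ the copy $\beta(\cB)$, $\beta(X)=(0,X,0,0)$, is a Serre subcategory with $\cC_{\cA,\cB}/\beta(\cB)\simeq\cA$, and Keller's localization theorem \cite{Ke99} for derived categories of abelian categories then produces the short exact sequence of dg categories with kernel $D^b_{\beta(\cB)}(\cC_{\cA,\cB})$. This also gives the bounded $t$-structure on $\cT_{\cA,\cB}$ with heart $\beta(\cB)\simeq\cB$ for free, by restricting the standard $t$-structure on $D^b(\cC_{\cA,\cB})$; your alternative plan (fully faithfulness of $X\mapsto\Cone((X,0)\to(X,X))$ plus ``generation'') would still leave you to verify the filtration condition characterizing hearts of bounded $t$-structures. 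Without some substitute for the envelope computation and the passage through a Serre quotient of an abelian category, the proof of (ii) — and hence of the theorem — is incomplete.
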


Theorem \ref{th:Quillen} follows almost immediately from Theorems \ref{th:key_construction_intro} and \ref{th:Barwick}. 
  
Part \ref{SOD} of Theorem \ref{th:key_construction_intro} is straightforward. To prove \ref{ses} and \ref{functors_on_components} we give an explicit description of the category $\cL\cH(\cE_{\cA,\cB})$ -- the right abelian envelope of the category $\cE_{\cA,\cB},$ see Section \ref{sec:description_of_the_envelope}. Recall from \cite{Sch99} that for a quasi-abelian category $\cE$ its right abelian envelope is the heart of the bounded $t$-structure on $D^b(\cE)$ whose connective part is the subcategory of complexes in non-negative homological degrees. This heart consists of complexes of the form $\Cone(X\xto{f} Y),$ where $f$ is a monomorphism (not necessarily strict), see Proposition \ref{prop:generalities_on_abelian_envelope}. Both functors in \eqref{eq:key_ses} are $t$-exact, where we take the standard $t$-structure on $D^b(\cA).$

The category $D^b(\cE_{\cA,\cB})$ was in fact considered previously (with a different description) in \cite{KL15, E20} in the case $\cA=\Coh(A),$ $\cB=\Coh(A/I),$ where $A$ is a commutative noetherian ring and $I\subset A$ is an ideal such that $I^2=0.$ This category is also closely related with a certain lax pullback which was implicitly used in \cite[Proofs of Theorem 2.25 and Corollary 3.5]{LT19}, see Example \ref{ex:familiar_example} for details.

{\noindent{\bf Convention.}} We will use only the homological grading.

{\noindent{\bf Acknowledgements.}} This work was performed at the Steklov International Mathematical Center
and supported by the Ministry of Science and Higher Education of the
Russian Federation (agreement no. 075-15-2025-303). 

\section{Preliminaries}

\subsection{Quasi-abelian categories}
\label{ssec:quasi_abelian}

We refer to \cite{Sch99} for the general theory of quasi-abelian categories. The closely related notion is that of a torsion theory in an abelian category as defined in \cite{HRS96}. This relation is studied in \cite[Section 5.4 and Appendix B]{BVdB03}. 

Recall that an additive category $\cE$ is called quasi-abelian if $\cE$ has kernels and cokernels, the class of strict monomorphisms is closed under pushouts and the class of strict epimorphisms is closed under pullbacks. Here a monomorphism $f$ is strict if $f=\ker(\coker(f)),$ and similarly for epimorphisms. In particular, a quasi-abelian category $\cE$ has an exact structure, for which the admissible short exact sequences are of the form
\begin{equation}\label{eq:adm_ses}
0\to X\xto{f} Y\xto{g} Z\to 0,
\end{equation}
where $f=\ker(g)$ and $g=\coker(f).$ In particular, we have the bounded derived category $D^b(\cE),$ which we consider as a dg category over $\Z.$

Clearly, any abelian category is quasi-abelian. An additive functor between quasi-abelian categories is called exact if it preserves admissible short exact sequences.
A functor $F:\cE\to\cA$ from a quasi-abelian category $\cE$ to an abelian category $\cA$ is called right exact if for any admissible short exact sequence \eqref{eq:adm_ses} we have an exact sequence
\begin{equation*}
F(X)\to F(Y)\to F(Z)\to 0.
\end{equation*}

\begin{prop}\label{prop:generalities_on_abelian_envelope}\cite[Definition 1.2.18, Propositions 1.2.32 and 1.2.34]{Sch99} Let $\cE$ be a small quasi-abelian category. 
\begin{enumerate}[label=(\roman*),ref=(\roman*)]
	\item The category $D^b(\cE)$ has a bounded $t$-structure, whose heart consists of objects of the form $\Cone(X\xto{f} Y),$ where $f$ is a monomorphism (but not necessarily a strict monomorphism). This heart is denoted by $\cL\cH(\cE).$ The inclusion functor $\cE\to\cL\cH(\cE),$ $X\mapsto X,$ is exact.
	\item The functor $D^b(\cE)\to D^b(\cL\cH(\cE))$ is an equivalence.
	\item The functor $\cE\to \cL\cH(\cE)$ satisfies the following universal property: for any small abelian category $\cA$ it induces an equivalence
	\begin{equation*}
	\Fun^{\rex}(\cL\cH(\cE),\cA)\xto{\sim}\Fun^{\rex}(\cE,\cA).
	\end{equation*}
	Here the superscript $\rex$ means ``right exact''.
\end{enumerate}
\end{prop}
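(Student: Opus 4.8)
The plan is to follow Schneiders' construction, which we recall in three steps: build the bounded $t$-structure on $D^b(\cE)$ by explicit truncation and read off its heart (part (i)); show the induced functor $D^b(\cE)\to D^b(\cL\cH(\cE))$ is an equivalence by a d\'evissage combined with an $\Ext$-comparison (part (ii)); and extend right exact functors by an explicit formula, with uniqueness forced by the generation properties of $\cE$ (part (iii)).

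\emph{The $t$-structure.} Let $\cD_{\geq 0}\subseteq D^b(\cE)$ be the full subcategory of complexes isomorphic to one concentrated in non-negative homological degrees, and, for a bounded complex $E_\bullet=(\cdots\to E_1\xto{d_1}E_0\xto{d_0}E_{-1}\to\cdots)$ of objects of $\cE$, set
\begin{equation*}
\tau_{\geq 1}E=(\cdots\to E_2\to\ker(d_1)\to 0\to\cdots),\qquad
\tau_{\leq 0}E=(\cdots\to 0\to\operatorname{coim}(d_1)\xto{\bar d_1}E_0\to E_{-1}\to\cdots),
\end{equation*}
where $\operatorname{coim}(d_1)=\coker(\ker(d_1)\hto E_1)$ and $\bar d_1$ is induced by $d_1$; since $\ker(d_1)\hto E_1$ is a strict monomorphism, $\bar d_1$ is a monomorphism, though in general not a strict one. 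The sequence of complexes $0\to\tau_{\geq 1}E\to E\to\tau_{\leq 0}E\to 0$ is admissible in each degree, hence a distinguished triangle in $D^b(\cE)$; one checks that $\tau_{\geq 1}$ and $\tau_{\leq 0}$ are functorial and adjoint to the inclusions of the respective subcategories, which establishes that $(\cD_{\leq 0},\cD_{\geq 0})$ (with $\cD_{\leq 0}$ the essential image of $\tau_{\leq 0}$) is a bounded $t$-structure. Note that $\tau_{\leq 0}E$ may have a nonzero term in degree $1$ yet still lie in $\cD_{\leq 0}$, precisely because $\bar d_1$ is a monomorphism — this is where the quasi-abelian case departs from the abelian one. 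Tracing through the definitions, the heart $\cD_{\leq 0}\cap\cD_{\geq 0}$ is exactly the full subcategory of two-term complexes $\Cone(X\xto{f}Y)=[X\xto{f}Y]$ (with $X$ in degree $1$, $Y$ in degree $0$) for which $f$ is a monomorphism; this is $\cL\cH(\cE)$, and being the heart of a $t$-structure it is abelian. Finally $\cE\to\cL\cH(\cE)$, $X\mapsto\Cone(0\to X)$, is fully faithful and carries an admissible short exact sequence of $\cE$ to a short exact sequence of $\cL\cH(\cE)$, hence is exact; this gives part (i).

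\emph{The equivalence.} The exact functor $\cE\to\cL\cH(\cE)$ induces $D^b(\cE)\to D^b(\cL\cH(\cE))$, which is $t$-exact for the $t$-structure just constructed on the source and the standard one on the target, and is the identity on hearts. It is essentially surjective because, by part (i), every object of $\cL\cH(\cE)$ is a two-term complex of objects of $\cE$, and $D^b(\cL\cH(\cE))$ is generated by its heart under shifts and cones, all of which lift. For full faithfulness one runs a d\'evissage along the two $t$-structures to reduce the comparison of $\Hom$-groups to the equality of Yoneda $\Ext$-groups $\Ext^n_{\cE}(X,Y)=\Ext^n_{\cL\cH(\cE)}(X,Y)$ for $X,Y\in\cE$. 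This in turn holds because $\cE$ is closed under extensions in $\cL\cH(\cE)$, an admissible short exact sequence of $\cE$ is precisely a short exact sequence of $\cL\cH(\cE)$ with all three terms in $\cE$, and every object $M\in\cL\cH(\cE)$ admits a length-one resolution $0\to X\to Y\to M\to 0$ by objects of $\cE$ (its defining presentation), which lets one replace any Yoneda $n$-extension between objects of $\cE$, stepwise from the right, by an equivalent one with all terms in $\cE$. This gives part (ii).

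\emph{The universal property and the main obstacle.} For a right exact functor $F\colon\cE\to\cA$ into a small abelian category, define $\wt F\colon\cL\cH(\cE)\to\cA$ by $\wt F(M)=\coker\!\bigl(F(X)\xto{F(f)}F(Y)\bigr)$, where $f\colon X\to Y$ is a monomorphism of $\cE$ presenting $M$. One checks $\wt F$ is independent of the presentation and functorial — any two presentations of $M$ are dominated by a common one, and any morphism of $\cL\cH(\cE)$ lifts to a morphism of presentations — that it is right exact — resolve a right exact sequence of $\cL\cH(\cE)$ by presentations, apply $F$, and chase the resulting diagram using right exactness of $F$ — and that it restricts to $F$ on $\cE$. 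Uniqueness is automatic, since $\cL\cH(\cE)$ is generated by $\cE$ under cokernels and right exact functors preserve cokernels; this yields the asserted equivalence $\Fun^{\rex}(\cL\cH(\cE),\cA)\xto{\sim}\Fun^{\rex}(\cE,\cA)$ of part (iii). The main obstacle is internal to part (i): one must prove that the subcategory $\cD_{\geq 0}$ defined above is genuinely the aisle of a $t$-structure — equivalently, that the category $\cL\cH(\cE)$ of formal cokernels of monomorphisms of $\cE$ is abelian, i.e. that in it the canonical comparison morphism from the coimage to the image of any map is invertible. It is exactly here that the quasi-abelian hypotheses on $\cE$ (existence of kernels and cokernels, stability of strict monomorphisms under pushout and of strict epimorphisms under pullback) are indispensable, with no counterpart for a general additive or exact category; the $\Ext$-comparison underlying the full faithfulness in part (ii) is a second, milder, point.
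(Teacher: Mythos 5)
The paper does not prove this proposition at all---it is recalled verbatim from Schneiders \cite[Definition 1.2.18, Propositions 1.2.32 and 1.2.34]{Sch99}---and your sketch reconstructs essentially the arguments of that source: the explicit kernel/coimage truncations defining the left $t$-structure, the d\'evissage plus $\Ext$-comparison for the derived equivalence, and the cokernel formula (forced by right exactness) for extending functors along $\cE\to\cL\cH(\cE)$. The sketch is correct as far as it goes; the one local point worth adjusting is that the monomorphy of $\bar d_1\colon\operatorname{coim}(d_1)\to E_0$ is deduced from stability of strict epimorphisms under pullback (applied to $E_1\onto\operatorname{coim}(d_1)$), not merely from $\ker(d_1)\hto E_1$ being a strict monomorphism.
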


Following \cite[Section 1.2.4]{Sch99}, we call the category $\cL\cH(\cE)$ the (right) abelian envelope of $\cE.$ We recall the following characterization of the category $\cL\cH(\cE).$

\begin{prop}\label{prop:characterization_of_LH}\cite[Proposition 1.2.36]{Sch99}
	Let  $\cE$ be a small quasi-abelian category and let $\cA$ be a small abelian category. Suppose that we have a fully faithful functor $F:\cE\to\cA$ such that the following conditions hold.
	\begin{enumerate}[label=(\roman*),ref=(\roman*)]
		\item The essential image $F(\cE)\subset\cA$ is closed under taking subobjects. 
		\item For every object $X\in\cA$ there exists an object $Y\in\cE$ and an epimorphism $F(Y)\onto X.$
	\end{enumerate}
	Then $F$ extends to an equivalence of categories $\cL\cH(\cE)\xto{\sim}\cA.$
\end{prop}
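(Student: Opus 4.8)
The plan is to exhibit $\cA$, together with $F$, as the right abelian envelope of $\cE$, by way of the universal property recorded in Proposition~\ref{prop:generalities_on_abelian_envelope}. The first step is to check that $F$ is right exact. It preserves kernels: if $k\colon K\to X$ is the kernel in $\cE$ of a morphism $f$, then $\ker_\cA(Ff)\subseteq FX$ lies in $F(\cE)$ by~(i), say it equals $F(K')$ for a morphism $k'\colon K'\to X$ of $\cE$, and comparing the two universal properties through the full faithfulness of $F$ yields $K\cong K'$ over $X$; in particular $F$ sends strict monomorphisms to monomorphisms. To see that $F$ sends a strict epimorphism $g\colon Y\onto Z$ to an epimorphism, write $\im_\cA(Fg)\subseteq FZ$ as $F(Z_0)$ using~(i); the full faithfulness of $F$ turns the factorization $FY\onto F(Z_0)\hto FZ$ into a factorization $g=jh$ in $\cE$ with $j$ a monomorphism, and since $g$ is a (strict, hence ordinary) epimorphism a short computation shows that $j$ admits a section, so $j$ is an isomorphism and $Fg$ is an epimorphism. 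Hence $F$ is right exact, and by Proposition~\ref{prop:generalities_on_abelian_envelope}(iii) it extends to a right exact functor $\bar F\colon\cL\cH(\cE)\to\cA$ with $\bar F|_{\cE}\cong F$. Since by Proposition~\ref{prop:generalities_on_abelian_envelope}(i) every object of $\cL\cH(\cE)$ has the form $\Cone(X'\xto{f}X)=\coker_{\cL\cH(\cE)}(X'\xto{f}X)$ for a monomorphism $f$ of $\cE$, and $\bar F$ preserves cokernels, we obtain $\bar F(\Cone(X'\xto{f}X))\cong\coker_\cA(Ff)$.

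For essential surjectivity, given $A\in\cA$ I would pick an epimorphism $FX\onto A$ using~(ii); its kernel is $F(X')$ for some $X'\in\cE$ by~(i), realized by a morphism $\iota\colon X'\to X$ of $\cE$ which is a monomorphism because $F$ is faithful, and then $A\cong FX/F(X')\cong\coker_\cA(F\iota)\cong\bar F(\Cone(X'\xto{\iota}X))$.

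The remaining, and I expect hardest, point is full faithfulness of $\bar F$. Rather than chasing long exact sequences — where the failure of the objects of $\cE$ to be projective makes the naive reduction circular — I would construct a quasi-inverse $G\colon\cA\to\cL\cH(\cE)$ directly, sending $A$ to $\Cone(X'\xto{\iota}X)$ for a presentation as above. The real work is to show that $G$ is a well-defined functor. On objects, given two presentations $FX_0\onto A$ and $FX_0'\onto A$, one forms $FW:=FX_0\times_A FX_0'\subseteq FX_0\oplus FX_0'$, an object of $F(\cE)$ by~(i); the two projections are strict epimorphisms in $\cE$, their kernels lying in $F(\cE)$ by~(i) and hence being identified, via the full faithfulness of $F$, with the corresponding cokernels; chasing the resulting admissible short exact sequences of $\cE$ inside $\cL\cH(\cE)$ shows that $\Cone(U\to W)$, with $FU=\ker(FW\onto A)$, is canonically isomorphic to the cone of each of the two presentations. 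Functoriality is obtained the same way, lifting a morphism $A\to B$ through chosen presentations by realizing the relevant fiber product inside $\cE$ via~(i). Once $G$ is known to be a functor, the composites are identities: $\bar FG(A)\cong\coker_\cA(F\iota)\cong A$ naturally, while $G\bar F(\Cone(X'\xto{f}X))\cong\Cone(X'\xto{f}X)$ because $FX\onto\coker_\cA(Ff)$ is itself a presentation with kernel $\im_\cA(Ff)=F(X')$. Thus $\bar F$ is an equivalence of categories, which is the assertion. The main obstacle is the bookkeeping of the comparison isomorphisms between different presentations required to make $G$ well defined and functorial; beyond that, every step reduces, through conditions~(i) and~(ii), to realizing subobjects and fiber products of objects of $\cA$ inside $\cE$.
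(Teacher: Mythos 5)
The paper does not actually prove this statement---it is imported wholesale from \cite[Proposition 1.2.36]{Sch99}---so there is no internal proof to compare yours against; I am judging the argument on its own. The first half of what you write is complete and correct: the verification that $F$ sends kernels to kernels and strict epimorphisms to epimorphisms (hence is exact on admissible short exact sequences, in particular right exact), the extension to $\bbar{F}\colon\cL\cH(\cE)\to\cA$ via Proposition \ref{prop:generalities_on_abelian_envelope}(iii), the identification $\bbar{F}(\Cone(f))\cong\coker_{\cA}(Ff)$, and essential surjectivity all go through exactly as you say. The gap is where you expect it: for the quasi-inverse $G$, showing that the comparison isomorphism between two presentations (via the fiber product $FX_0\times_A FX_0'$ and the nine/snake lemma in $\cL\cH(\cE)$) is \emph{coherent}---compatible with a third presentation, independent of the chosen lift when defining $G$ on a morphism, and compatible with composition---is precisely where the content of Schneiders' proof sits, and as written you have only asserted it. Until that is done, $G$ is not known to be a functor and the argument is incomplete.

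There is a way to discharge this that avoids the bookkeeping entirely: prove first that $\bbar{F}$ is exact and faithful. For exactness, any monomorphism $M\hto N$ in $\cL\cH(\cE)$ can be presented as $X/Y'\to Y/Y'$ with $Y'\subseteq X\subseteq Y$ monomorphisms in $\cE$ (choose $Y\onto N$ with $Y\in\cE$, pull back along $M\to N$, and use that $\cE$ is closed under subobjects in $\cL\cH(\cE)$, being the torsion-free class of a cotilting torsion pair); since $\bbar{F}$ preserves these cokernels and $FY'\subseteq FX\subseteq FY$, the induced map $FX/FY'\to FY/FY'$ is again a monomorphism. Faithfulness then follows because an exact functor between abelian categories that kills no nonzero object is faithful, and $\coker_{\cA}(Ff)=0$ for a monomorphism $f$ of $\cE$ forces $Ff$, hence $f$, to be an isomorphism. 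Once $\bbar{F}$ is known to be faithful, all of your comparison isomorphisms are the \emph{unique} maps lying over $\id_A$, so their coherence is automatic; better still, fullness can then be proved directly (lift $\phi\colon\bbar{F}M\to\bbar{F}N$ through the pullback $FX\times_{\bbar{F}N}FY$, which lies in $F(\cE)$ by hypothesis~(i), and use faithfulness to see that the resulting map $Z\to Y\to N$ descends to $M$), making the explicit construction of $G$ unnecessary. I would recommend restructuring the endgame along these lines; the rest of your argument can stand as is.
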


For completeness we also recall the relation between quasi-abelian categories and torsion pairs in abelian categories. Recall from \cite{HRS96} that for an abelian category $\cA$ a pair of full subcategories $(\cT,\cF)$ is called a torsion pair if $\Hom(\cT,\cF)=0$ and for any $X\in\cA$ there is a short exact sequence
\begin{equation*}
0\to Y\to X\to Z\to 0,\quad Y\in\cT,\,Z\in\cF.
\end{equation*}
In this case $\cT$ and $\cF$ are additive subcategories closed under extensions, $\cT$ is closed under taking quotients and $\cF$ is closed under taking subobjects. Moreover, both $\cT$ and $\cF$ are quasi-abelian and the inclusion functors $\cT\to\cA$ and $\cF\to\cA$ are exact. Such a torsion pair is called cotilting if every object of $\cA$ is a quotient of an object of $\cF.$

\begin{prop}\cite[Proposition B.3]{BVdB03}
Let $\cE$ be a small additive category. The following are equivalent.
\begin{enumerate}[label=(\roman*),ref=(\roman*)]
	\item $\cE$ is quasi-abelian.
	\item There exists a cotilting torsion pair $(\cT,\cF)$ in a small abelian category $\cA$ with an equivalence $\cE\simeq \cF.$ \label{via_cotilting_pair}
\end{enumerate}
In the situation of \ref{via_cotilting_pair} we have $\cA\simeq\cL\cH(\cE).$
\end{prop}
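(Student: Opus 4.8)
The plan is to prove the two implications directly, using the abelian envelope $\cL\cH(\cE)$ from Propositions~\ref{prop:generalities_on_abelian_envelope} and~\ref{prop:characterization_of_LH}, and then to read off the last assertion from Proposition~\ref{prop:characterization_of_LH}.

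For \ref{via_cotilting_pair}$\Rightarrow$(i) I would verify the axioms of a quasi-abelian category for $\cF$, computing everything inside $\cA$. Kernels of maps in $\cF$ are computed in $\cA$, since they are subobjects of objects of $\cF$ and $\cF$ is closed under subobjects. For $f\colon X\to Y$ in $\cF$ the cokernel in $\cF$ is $\coker_\cA(f)/t\bigl(\coker_\cA(f)\bigr)$, where $t(-)$ denotes the torsion subobject; this is forced by $\Hom(\cT,\cF)=0$. A short computation then identifies the strict morphisms: $g\colon Y\to Z$ is a strict epimorphism in $\cF$ exactly when it is an epimorphism in $\cA$, and $f\colon X\to Y$ is a strict monomorphism in $\cF$ exactly when $\coker_\cA(f)$ already lies in $\cF$. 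Strict epimorphisms are stable under pullback because a pullback of maps between objects of $\cF$, computed in $\cA$, is a subobject of a biproduct of objects of $\cF$ and hence lies in $\cF$, while pullbacks of epimorphisms are epimorphisms in $\cA$; dually, strict monomorphisms are stable under pushout because the pushout in $\cA$ of a strict monomorphism $X\hto Y$ (with cokernel $C\in\cF$) along any $X\to X'$ with $X'\in\cF$ sits in a short exact sequence $0\to X'\to P\to C\to 0$, hence lies in $\cF$ since torsion-free classes are closed under extensions (if $0\to F_1\to G\to F_2\to 0$ with $F_i\in\cF$, then $tG\to F_2$ vanishes, so $tG\hto F_1$ and therefore $tG=0$), and its cokernel is still $C\in\cF$. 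Thus $\cF$ is quasi-abelian.

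For (i)$\Rightarrow$\ref{via_cotilting_pair}, set $\cA:=\cL\cH(\cE)$ and identify $\cE$ with its image $\cF$ under the canonical fully faithful exact embedding, which satisfies the hypotheses of Proposition~\ref{prop:characterization_of_LH}; thus $\cF$ is closed under subobjects in $\cA$ and every object of $\cA$ is a quotient of one in $\cF$. Put $\cT:=\{T\in\cA:\Hom_\cA(T,F)=0\text{ for all }F\in\cF\}$, so that $\Hom(\cT,\cF)=0$ by definition. The key point is to produce a torsion decomposition of an arbitrary $M\in\cA$. By Proposition~\ref{prop:generalities_on_abelian_envelope}\,(i) we may write $M\cong\Cone(f)$ for a monomorphism $f\colon X\to Y$ in $\cE$; using the coimage--image factorization in $\cE$ write $f=j\circ\bar f$ with $\bar f\colon X\to I$ both a monomorphism and an epimorphism and $j\colon I\hto Y$ a strict monomorphism, and set $Z:=\coker_\cE(j)\in\cE$ and $T_M:=\Cone(\bar f)$. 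The octahedral axiom applied to $f=j\bar f$ produces a triangle $T_M\to M\to Z\to T_M[1]$ in $D^b(\cE)$.

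Now $T_M$ lies in the heart $\cL\cH(\cE)=\cA$ because $\bar f$ is a monomorphism, and $T_M\in\cT$ because $\bar f$ is an epimorphism: applying $\Hom(-,F)$ to the triangle $X\xto{\bar f}I\to T_M\to X[1]$, the term $\Hom(X[1],F)$ vanishes (negative $\Ext$-groups are zero) and $\bar f^{*}\colon\Hom_\cE(I,F)\to\Hom_\cE(X,F)$ is injective, so $\Hom_\cA(T_M,F)=0$ for all $F\in\cF$. Since $T_M$, $M$ and $Z$ all lie in the heart, the above triangle is a short exact sequence $0\to T_M\to M\to Z\to 0$ in $\cA$ with $T_M\in\cT$ and $Z\in\cF$. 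Together with $\Hom(\cT,\cF)=0$ this shows $(\cT,\cF)$ is a torsion pair in $\cA$ — in particular $\cF=\cT^{\perp}$, since for $X$ with $\Hom_\cA(\cT,X)=0$ the inclusion $T_X\hto X$ in such a decomposition is the zero map, forcing $T_X=0$ — and the pair is cotilting because every object of $\cA$ is a quotient of one in $\cF$; and $\cE\simeq\cF$ by construction. Finally, in the situation of \ref{via_cotilting_pair} the inclusion $\cF\hto\cA$ is fully faithful with image closed under subobjects, and every object of $\cA$ is a quotient of one in $\cF$, so Proposition~\ref{prop:characterization_of_LH} gives $\cL\cH(\cE)\simeq\cL\cH(\cF)\xto{\sim}\cA$.

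I expect the main difficulty to lie in the implication (i)$\Rightarrow$\ref{via_cotilting_pair}: one must chain together the explicit description of $\cL\cH(\cE)$ as cones of (not necessarily strict) monomorphisms, the coimage--image factorization in $\cE$, and the octahedral axiom, and then check simultaneously that $T_M$ stays in the heart and is left orthogonal to $\cF$. The implication \ref{via_cotilting_pair}$\Rightarrow$(i) and the final equivalence are then routine once the strict monomorphisms and epimorphisms of $\cF$ have been identified.
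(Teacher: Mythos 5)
The paper states this proposition purely as a citation to \cite{BVdB03} and contains no proof of its own, so there is nothing internal to compare against; judged on its own terms, your argument is correct and is essentially a reconstruction of the Bondal--Van den Bergh/Schneiders proof. For \ref{via_cotilting_pair}$\Rightarrow$(i) your identification of kernels, cokernels and strict morphisms of $\cF$ checks out (strict epimorphisms of $\cF$ are exactly the maps that are epimorphisms in $\cA$, since the image of a morphism between objects of $\cF$ is a subobject of an object of $\cF$ and hence torsion-free), and the pullback/pushout verifications go through as written. For (i)$\Rightarrow$\ref{via_cotilting_pair} the torsion decomposition via the coimage--image factorization and the octahedron is the right mechanism, and your check that $T_M$ both lies in the heart and is left orthogonal to $\cF$ is complete. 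Two small points you should derive rather than assert: first, the fact that every object of $\cL\cH(\cE)$ is a quotient of an object of $\cE$ (needed for ``cotilting'') does not require invoking the hypotheses of Proposition~\ref{prop:characterization_of_LH} for the canonical embedding --- it is immediate from Proposition~\ref{prop:generalities_on_abelian_envelope}, since the triangle $X\to Y\to\Cone(f)\to X[1]$ with all three terms in the heart exhibits an epimorphism $Y\onto\Cone(f)$; second, your claim that the image of $\cE$ in $\cL\cH(\cE)$ is closed under subobjects is true but is itself a nontrivial result of Schneiders, and fortunately your proof of (i)$\Rightarrow$\ref{via_cotilting_pair} never uses it --- closure under subobjects is only needed for the final assertion $\cA\simeq\cL\cH(\cE)$, where it follows from the general fact, recorded in the paper, that the torsion-free class of any torsion pair is closed under taking subobjects.
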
 

\subsection{Ind-objects and pro-objects}
\label{ssec:ind_obj_pro_obj}

We refer to \cite{KS01} for the general theory of ind-objects in ordinary categories. For a small abelian category $\cA$ we have the abelian category $\Ind(\cA)$ of ind-objects, and the inclusion functor $\cA\to\Ind(\cA)$ is exact. We write the objects of $\Ind(\cA)$ in the form $\inddlim[i\in I]X_i,$ where $I$ is a directed poset and we are assuming a functor $I\to\cA,$ $i\mapsto X_i.$ Recall that we have an equivalence
\begin{equation*}
\Ind(\cA)\simeq \Fun^{\lex}(\cA^{op},\Ab),\quad \inddlim[i] X_i\mapsto \indlim[i] \h_{X_i},
\end{equation*}
where $h_{X_i}=\Hom(-,X_i).$ Here the superscript $\lex$ means ``left exact''. 

It is convenient to give another equivalent description of the category $\Ind(\cA)$ via sheaves. Namely, consider the Grothendieck topology on the category $\cA,$ for which a sieve $S\subset h_X$ is a covering sieve if it contains an epimorphism $Y\onto X.$ We will call it the single epi topology, following \cite{Ka22}. Then a functor $F:\cA^{op}\to \Ab$ is left exact if and only if $F$ is a sheaf for the single epi topology, which is also additive as a functor. Hence, we have an equivalence
\begin{equation*}
\Ind(\cA)\simeq \Shv^{\add}(\cA;\Ab).
\end{equation*}

Given a functor $F:\cA^{op}\to\Ab,$ considered as a presheaf of abelian groups, we denote by $F^{\sharp}$ its sheafification for the single epi topology.  We will use the following standard statement.

\begin{prop}\label{prop:sheafification_of_additive_presheaves}
For an additive functor $F:\cA^{op}\to \Ab,$ the sheafification $F^{\sharp}$ is an additive sheaf. In particular, the inclusion $\Ind(\cA)\to \Fun^{\add}(\cA^{op},\Ab)$ has an exact left adjoint, given by the composition
\begin{equation}\label{eq:left_adjoint_via_sheafification}
\Fun^{\add}(\cA^{op},\Ab)\xto{(-)^{\sharp}} \Shv^{\add}(\cA^{op},\Ab)\simeq \Ind(\cA).
\end{equation} 
\end{prop}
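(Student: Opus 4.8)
We want to show that for an additive functor $F:\cA^{op}\to\Ab$, the sheafification $F^\sharp$ (for the single epi topology) is again additive, from which the ``in particular'' clause follows formally. The strategy is to run the usual Grothendieck plus-construction and check at each stage that additivity is preserved.

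First I would recall the explicit plus-construction: for a presheaf $F$, set $F^+(X)=\colim_{S}\Eq\bigl(\prod_{(Y\to X)\in S}F(Y)\toto \prod F(Y\times_X Y')\bigr)$, the colimit running over covering sieves $S$ of $X$ ordered by reverse inclusion, and then $F^\sharp=F^{++}$. In the single epi topology a cofinal family of covering sieves of $X$ is given by the sieves generated by a single epimorphism $p:Y\onto X$; moreover for such a sieve the fiber product $Y\times_X Y$ exists in $\cA$ (it is the kernel pair of $p$, which exists since $\cA$ is abelian) and the Čech condition reads $\Eq\bigl(F(Y)\toto F(Y\times_X Y)\bigr)$. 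So $F^+(X)=\colim_{p:Y\onto X}\Eq(F(Y)\toto F(Y\times_X Y))$.

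The key step is additivity of $F^+$. Given two objects $X_1,X_2\in\cA$, I claim $F^+(X_1\oplus X_2)\xto{\sim}F^+(X_1)\oplus F^+(X_2)$. The point is that the index category of epimorphisms onto $X_1\oplus X_2$ receives a cofinal functor from (epis onto $X_1$) $\times$ (epis onto $X_2$) via $(p_1:Y_1\onto X_1,\,p_2:Y_2\onto X_2)\mapsto (p_1\oplus p_2:Y_1\oplus Y_2\onto X_1\oplus X_2)$: indeed any epimorphism $q:Z\onto X_1\oplus X_2$ is refined by $q_1\oplus q_2$ where $q_i$ is the composite $Z\xonto{q}X_1\oplus X_2\onto X_i$, since the canonical map $Z\to Z\oplus Z$ followed by $q_1\oplus q_2$ factors $q$ (using that $X_1\oplus X_2\to (X_1\oplus X_2)\oplus(X_1\oplus X_2)$, $(\pi_1,\pi_2)$, is a section of the codiagonal-type map). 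Then for a product epimorphism the kernel pair is $(Y_1\times_{X_1}Y_1)\oplus(Y_2\times_{X_2}Y_2)$, so using additivity of $F$ (which gives $F(Y_1\oplus Y_2)\cong F(Y_1)\oplus F(Y_2)$ and likewise on the fiber products) the equalizer splits as a direct sum, and passing to the (filtered) colimit — which commutes with finite direct sums — yields additivity of $F^+$. Applying this twice gives additivity of $F^\sharp=F^{++}$, and $F^\sharp$ is a sheaf by the general plus-construction theorem; being a sheaf for the single epi topology and additive, it lies in $\Shv^{\add}(\cA;\Ab)\simeq\Ind(\cA)$.

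Finally, for the ``in particular'' statement: sheafification is always left adjoint to the inclusion of sheaves into presheaves, and restricting to additive presheaves we have just shown $(-)^\sharp$ lands in $\Shv^{\add}\simeq\Ind(\cA)$; the adjunction $\Hom_{\Ind(\cA)}(F^\sharp,G)\cong\Hom_{\Fun^{\add}(\cA^{op},\Ab)}(F,G)$ for $G$ a sheaf is inherited from the presheaf-level adjunction since every additive natural transformation from $F$ factors through $F^\sharp$. Exactness of this left adjoint follows because the plus-construction is a filtered colimit of finite limits (equalizers) of exact functors — more precisely, a short exact sequence of additive presheaves $0\to F'\to F\to F''\to 0$ stays exact after applying each $\Eq(F(Y)\toto F(Y\times_X Y))$ up to the usual left-exactness, and one uses that in the single epi topology the relevant Čech cohomology of a presheaf vanishes in the colimit, which is the standard argument that sheafification into a Grothendieck topos is exact. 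The main obstacle is the cofinality argument in the additivity step: one must verify carefully that product epimorphisms $Y_1\oplus Y_2\onto X_1\oplus X_2$ form a cofinal subsystem and that kernel pairs behave additively, using only that $\cA$ is abelian.
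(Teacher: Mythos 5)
Your overall strategy --- reduce to single-epi covers, show that ``product'' epimorphisms $Y_1\oplus Y_2\onto X_1\oplus X_2$ are cofinal among covers of a direct sum, and then use additivity of kernel pairs together with the fact that filtered colimits commute with finite direct sums --- is precisely the content of the paper's (very terse) proof, and you handle the exactness clause the same way the paper does (sheafification is always exact). However, the one step you yourself flag as the main obstacle, the cofinality claim, is carried out incorrectly, and the problem is not merely expository: your proposed refinement is wrong. You claim that an epimorphism $q:Z\onto X_1\oplus X_2$ is refined by $q_1\oplus q_2:Z\oplus Z\to X_1\oplus X_2$ with $q_i=\pi_i\circ q$, and you justify this by factoring $q$ through $q_1\oplus q_2$ via the diagonal $Z\to Z\oplus Z$. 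That is the wrong direction of factorization: in the colimit over covering sieves (ordered by reverse inclusion) cofinality requires a product cover whose sieve is \emph{contained} in the sieve generated by $q$, i.e.\ a product epimorphism that factors \emph{through} $q$. And $q_1\oplus q_2$ genuinely need not factor through $q$: restricting a putative factorization $Z\oplus Z\to Z$ to the first summand would produce a lift of $\iota_1\pi_1 q:Z\to X_1\oplus X_2$ along $q$, whose obstruction is $q^*(\iota_1\pi_1)^*[e]\in \Ext^1(Z,\ker q)$, where $[e]$ is the class of $0\to\ker q\to Z\xto{q}X_1\oplus X_2\to 0$. This obstruction is nonzero already for finite abelian groups: take $X_1=X_2=\Z/2$, $Z=(\Z/4\oplus\Z/4)/\langle(2,2)\rangle$ and $q$ the mod-$2$ reduction; then $[e]$ restricts nontrivially to both factors, $(\iota_1\pi_1)^*[e]\neq[e]$, and $q^*(\iota_1\pi_1)^*[e]\neq 0$, so no lift exists.

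The correct refinement --- and this single observation is essentially the whole of the paper's proof --- is to take $X'=\ker(q_2)=q^{-1}(X_1)$ and $Y'=\ker(q_1)=q^{-1}(X_2)$. The induced maps $X'\to X_1$ and $Y'\to X_2$ are epimorphisms because $q$ is (the preimage of a subobject under an epimorphism maps onto it), and the sum of the two inclusions $X'\oplus Y'\to Z$ composed with $q$ is exactly the direct sum of these two epimorphisms, since $q$ kills the cross terms by construction. So this product epimorphism does factor through $q$, the product covers are cofinal, and the rest of your argument (additivity of kernel pairs of product epimorphisms, commuting the filtered colimit past finite direct sums, applying the plus-construction twice) goes through unchanged. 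Two smaller remarks: the closing discussion of ``Čech cohomology vanishing in the colimit'' is unnecessary --- exactness of the composite \eqref{eq:left_adjoint_via_sheafification} follows from exactness of sheafification alone, which is what the paper invokes; and the restriction of the sheafification adjunction to additive (pre)sheaves is automatic because additive functors form a full subcategory of all presheaves.
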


\begin{proof}
We only need the following observation: for any objects $X,Y\in\cA$ and an epimorphism $(f,g):Z\onto X\oplus Y,$ we can find a pair of epimorphisms $f':X'\onto X$ and $g':Y'\onto Y$ such that the morphism $(f',g'):X'\oplus Y'\to X\oplus Y$ factors through $Z.$ Namely, it suffices to take $X'=\ker(g),$ $Y'=\ker(f),$ and define $f'$ and $g'$ as the compositions $X'\to Z\xto{f}X$ and $Y'\to Z\xto{g}Y.$

Exactness of the functor \eqref{eq:left_adjoint_via_sheafification} is automatic since the sheafification is always exact.
\end{proof}

Note that exactness of the functor \eqref{eq:left_adjoint_via_sheafification} is also a trivial special case of the Gabriel-Kuhn-Popesco theorem \cite{GP64, Ku94}.

We also have the abelian category of pro-objects $\Pro(\cA)\simeq \Ind(\cA^{op})^{op}.$ We write the objects of $\Pro(\cA)$ in the form $\proolim[i\in I]X_i,$ where $I$ is a codirected poset and we are assuming a functor $I\to\cA,$ $i\mapsto X_i.$ We will need the following immediate application of Proposition \ref{prop:sheafification_of_additive_presheaves}.

\begin{cor}\label{cor:making_a_functor_right_exact}
Let $\cA$ and $\cB$ be small abelian categories. Then the inclusion functor
\begin{equation*}
\incl:\Fun^{\rex}(\cA,\Pro(\cB))\to \Fun^{\add}(\cA,\Pro(\cB))
\end{equation*}
has a right adjoint $\incl^R.$ Given an additive functor $F:\cA\to\Pro(\cB)$ and an object $X\in\cB,$ the morphism of functors
\begin{equation*}
\Hom(F(-),X)\to \Hom(\incl^R(F)(-),X)
\end{equation*}
exhibits the target as a sheafification of the source for the single epi topology on $\cA.$ 
\end{cor}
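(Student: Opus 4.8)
The plan is to reduce the statement to the single-epi sheafification of Proposition~\ref{prop:sheafification_of_additive_presheaves}. I will work through the standard anti-equivalence $\Pro(\cB)\simeq\Fun^{\lex}(\cB,\Ab)^{op}$, obtained by applying the equivalence $\Ind(\cC)\simeq\Fun^{\lex}(\cC^{op},\Ab)$ of Subsection~\ref{ssec:ind_obj_pro_obj} to $\cC=\cB^{op}$, which sends an object $P$ to the functor $X\mapsto\Hom_{\Pro(\cB)}(P,X)$ on $\cB$; this functor is left exact because the inclusion $\cB\to\Pro(\cB)$ is exact. Combining this with exponential adjunction, one sees that giving an additive functor $F\colon\cA\to\Pro(\cB)$ is the same as giving a left exact functor $\cB\to\Fun^{\add}(\cA^{op},\Ab)$, namely $X\mapsto F_X:=\Hom_{\Pro(\cB)}(F(-),X)$ (additivity of $F$ corresponding to each $F_X$ being additive). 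I would first record the reformulation: $F$ is right exact if and only if each $F_X$ is a sheaf for the single epi topology on $\cA$. Indeed, right exactness of $F(A')\to F(A)\to F(A'')\to0$ in $\Pro(\cB)$ amounts to exactness of $0\to\Hom_{\Pro(\cB)}(F(A''),X)\to\Hom_{\Pro(\cB)}(F(A),X)\to\Hom_{\Pro(\cB)}(F(A'),X)$ for every $X\in\cB$ — the nontrivial implication because every object of $\Pro(\cB)$ is a cofiltered limit of objects of $\cB$ and $\Hom_{\Pro(\cB)}(P,-)$ preserves limits — and among additive presheaves the single-epi sheaves are precisely the left exact ones, by Subsection~\ref{ssec:ind_obj_pro_obj}. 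Thus $\incl$ corresponds to post-composition with the inclusion $\Ind(\cA)\hookrightarrow\Fun^{\add}(\cA^{op},\Ab)$.

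Next I would construct $\incl^R$. For an additive $F$, let $F_X^{\sharp}$ be the single-epi sheafification of $F_X$; it is again additive by Proposition~\ref{prop:sheafification_of_additive_presheaves}. The functor $X\mapsto F_X^{\sharp}$, from $\cB$ to $\Ind(\cA)$, is left exact, being the composite of the left exact functor $X\mapsto F_X$, from $\cB$ to $\Fun^{\add}(\cA^{op},\Ab)$, with the \emph{exact} sheafification functor $\Fun^{\add}(\cA^{op},\Ab)\to\Ind(\cA)$ of Proposition~\ref{prop:sheafification_of_additive_presheaves}. Reading the dictionary of the first paragraph backwards, this left exact functor $\cB\to\Ind(\cA)$ is $\incl^R(F)$; concretely, $\incl^R(F)(A)\in\Pro(\cB)$ is the object corresponding to the left exact additive functor $X\mapsto F_X^{\sharp}(A)$ on $\cB$ (left exactness of the latter using that evaluation at $A$ preserves kernels, as kernels in $\Ind(\cA)$ are computed pointwise). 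By the reformulation above, $\incl^R(F)\colon\cA\to\Pro(\cB)$ is right exact, and the sheafification units $F_X\to F_X^{\sharp}$, being natural in $X$, correspond to a morphism $\varepsilon_F\colon\incl^R(F)\to F$ that induces the unit $F_X\to F_X^{\sharp}$ on $\Hom_{\Pro(\cB)}(-,X)$.

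To conclude I would check that $\varepsilon_F$ is the counit of an adjunction $\incl\dashv\incl^R$, which also yields assertion~(2). Given $H\in\Fun^{\rex}(\cA,\Pro(\cB))$ and $\psi\colon H\to F$: each $H_X$ is a sheaf by the reformulation, so the morphism $F_X\to H_X$ induced by $\psi$ factors uniquely through $F_X\to F_X^{\sharp}$; by uniqueness these factorizations are natural in $X$, and since $P\mapsto(\Hom_{\Pro(\cB)}(P,X))_{X\in\cB}$ is faithful on $\Pro(\cB)$, they assemble to a unique morphism $H\to\incl^R(F)$ whose composite with $\varepsilon_F$ is $\psi$ — the universal property of a counit. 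Assertion~(2) is then immediate from the construction: the morphism $\Hom(F(-),X)\to\Hom(\incl^R(F)(-),X)$ induced by $\varepsilon_F$ is exactly the single-epi sheafification map $F_X\to F_X^{\sharp}$.

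The one step that is not purely formal is the claim in the second paragraph that sheafifying in the $\cA$-variable preserves left exactness in the $\cB$-variable — i.e. that the family $X\mapsto F_X^{\sharp}$ again takes values in $\Pro(\cB)$ and not merely in some larger functor category. This is precisely where one uses that the sheafification of Proposition~\ref{prop:sheafification_of_additive_presheaves} is \emph{exact}, not only that it exists; everything else is bookkeeping with the formalism of ind- and pro-objects and the universal property of sheafification.
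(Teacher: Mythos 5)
Your proof is correct and is essentially the paper's own argument, spelled out in detail: both identify $\Fun^{\rex}(\cA,\Pro(\cB))$ and $\Fun^{\add}(\cA,\Pro(\cB))$ contravariantly, via $F\mapsto(X\mapsto\Hom(F(-),X))$, with left exact functors from $\cB$ to additive sheaves resp.\ additive presheaves on $\cA$, and then obtain $\incl^R$ by post-composing with the exact sheafification functor of Proposition~\ref{prop:sheafification_of_additive_presheaves}. The only difference is that the paper records this as a single commutative square and leaves the verifications (which you carry out correctly) to the reader.
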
 

\begin{proof}
Indeed, we have a commutative square
\begin{equation*}
\begin{CD}
\Fun^{\rex}(\cA,\Pro(\cB)) @>{\sim}>> \Fun^{\lex}(\cB,\Shv^{\add}(\cA;\Ab))^{op}\\
@V{\incl}VV @VVV\\
\Fun^{\add}(\cA,\Pro(\cB)) @>{\sim}>> \Fun^{\lex}(\cB,\Fun^{\add}(\cA^{op},\Ab))^{op}.
\end{CD}
\end{equation*}
Here the horizontal functors are given by $F\mapsto (X\mapsto \Hom(F(-),X)).$ The corollary now follows directly from Proposition \ref{prop:sheafification_of_additive_presheaves}.
\end{proof}

\section{Right abelian envelope of the category $\cE_{\cA,\cB}$}
\label{sec:description_of_the_envelope}

Throughout this section, $\cA\supset\cB$ are small abelian categories as in Theorem \ref{th:key_construction_intro}. That is, $\cB$ is a strictly full subcategory of $\cA,$ which is closed under taking subobjects and quotients, and each object of $\cA$ is an extension of two objects of $\cB.$ The category $\cE_{\cA,\cB}$ is defined as in the introduction, i.e. it is the category of pairs $(X,Y),$ where $X\in\cA,$ $Y\subset X$ and $Y\in\cB,$ $X/Y\in\cB.$ We will describe directly the right abelian envelope $\cL\cH(\cE_{\cA,\cB})$ and deduce some of its structural properties.

We start with the following trivial observation.

\begin{lemma}\label{lem:intersections}
Let $X\in \cA$ be an object and suppose that $Y_1,Y_2\subset X$ are subobjects such that $X/Y_1, X/Y_2\in \cB.$ Then $X/(Y_1\cap Y_2)\in\cB.$
\end{lemma}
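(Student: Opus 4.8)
The plan is to exploit the fact that $\cB$ is closed under subobjects and quotients inside $\cA$, together with the standard fact that the quotient $X/(Y_1\cap Y_2)$ embeds into $X/Y_1 \oplus X/Y_2$. First I would recall that since $\cB$ is closed under subobjects, it is closed under finite products (being an additive subcategory), so $X/Y_1 \oplus X/Y_2 \in \cB$. Then I would consider the canonical morphism $X/(Y_1\cap Y_2) \to X/Y_1 \oplus X/Y_2$ induced by the two quotient maps.

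The key step is to verify that this canonical morphism is a monomorphism. Its kernel consists (after pulling back along $X \onto X/(Y_1\cap Y_2)$) of those $x \in X$ whose images in $X/Y_1$ and in $X/Y_2$ both vanish, i.e. $x \in Y_1$ and $x \in Y_2$, i.e. $x \in Y_1 \cap Y_2$; in abelian-category language, the kernel of $X \to X/Y_1 \oplus X/Y_2$ is $Y_1 \cap Y_2$ (the pullback of the two subobjects $Y_1, Y_2 \hookrightarrow X$), so the induced map on the quotient $X/(Y_1 \cap Y_2)$ is a monomorphism. This is a routine diagram chase, valid in any abelian category.

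Finally, since $\cB$ is closed under subobjects and $X/(Y_1\cap Y_2)$ is a subobject of $X/Y_1 \oplus X/Y_2 \in \cB$, we conclude $X/(Y_1 \cap Y_2) \in \cB$, as desired. I do not anticipate a serious obstacle here — the only mild care needed is the identification of $Y_1 \cap Y_2$ (defined as the pullback / intersection of subobjects) with the kernel of the diagonal-type map $X \to X/Y_1 \oplus X/Y_2$, which is a standard property of abelian categories; everything else is immediate from the hypotheses on $\cB$.
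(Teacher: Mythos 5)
Your proof is correct and is exactly the argument the paper gives: the canonical map $X/(Y_1\cap Y_2)\to X/Y_1\oplus X/Y_2$ is a monomorphism, and $\cB$ is closed under finite direct sums and subobjects. You have simply spelled out the routine verification that the paper leaves implicit.
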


\begin{proof}
Indeed, this follows from the monomorphism $X/(Y_1\cap Y_2)\to X/Y_1\oplus X/Y_2.$
\end{proof}

We denote by $i:\cB\to\cA$ the inclusion functor, and abusing the notation we denote by the same symbol the induced functor on the pro-completions $i:\Pro(\cB)\to\Pro(\cA)$.
The latter functor has a left adjoint $i^L:\Pro(\cA)\to\Pro(\cB),$ which (commutes with cofiltered limits and) on the objects of $\cA$ is given by
\begin{equation*}
i^L(X) = \proolim[\substack{Y\subset X,\\ Y\in\cB, X/Y\in\cB}] X/Y,\quad X\in\cA.
\end{equation*}
Consider now the functor
\begin{equation}\label{eq:direct_definition_of_j}
j:\cA\to\Pro(\cB),\quad j(X) = \proolim[\substack{Y\subset X,\\ Y\in\cB, X/Y\in\cB}] Y.
\end{equation}
Equivalently, $j$ can be defined by the functorial short exact sequence
\begin{equation}\label{eq:ses_defining_j}
0\to i(j(X))\to X\to i(i^L(X))\to 0.
\end{equation}
The following proposition is immediate from \eqref{eq:ses_defining_j} and also follows directly from \eqref{eq:direct_definition_of_j}. 

\begin{prop}\label{prop:j_preserves_epi}
The functor $j:\cA\to\Pro(\cB)$ preserves epimorphisms.
\end{prop}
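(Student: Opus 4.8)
The plan is to verify that $j$ sends an epimorphism $p\colon X\onto X'$ in $\cA$ to an epimorphism $j(X)\onto j(X')$ in $\Pro(\cB)$. Recall that in $\Pro(\cB)\simeq \Ind(\cB^{op})^{op}$ a morphism is an epimorphism if and only if the corresponding morphism in $\Ind(\cB^{op})$ is a monomorphism; concretely, $\proolim[i] Y_i\to \proolim[j] Y'_j$ is an epimorphism precisely when for every index $j$ the induced map $\proolim[i]Y_i\to Y'_j$ factors through some $Y'_{j'}\onto Y'_j$ --- equivalently, when the map of pro-objects is, up to isomorphism of pro-systems, represented by a levelwise epimorphism. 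I would work with the defining short exact sequence \eqref{eq:ses_defining_j}.

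\textbf{Approach via the short exact sequence.} Applying the (functorial) sequence \eqref{eq:ses_defining_j} to $p\colon X\onto X'$ gives a commutative diagram with exact rows
\begin{equation*}
\begin{CD}
0 @>>> i(j(X)) @>>> X @>>> i(i^L(X)) @>>> 0\\
@. @VV{i(j(p))}V @VV{p}V @VV{i(i^L(p))}V @.\\
0 @>>> i(j(X')) @>>> X' @>>> i(i^L(X')) @>>> 0.
\end{CD}
\end{equation*}
Here $X\onto X'$ is an epimorphism and $i^L$ is a left adjoint, hence right exact, so $i^L(p)$ is an epimorphism; since $i$ is exact (indeed fully faithful exact on pro-completions) the right vertical map is an epimorphism in $\Pro(\cA)$. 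A diagram chase in the abelian category $\Pro(\cA)$ --- using exactness of the rows and the snake lemma, or simply the fact that in the pushout square the left map surjects onto the fiber product --- shows that if the middle and right vertical maps are epimorphisms then the left one is too, provided one knows that the induced map $j(X)\to j(X')$ already lands correctly; more precisely, the cokernel of $i(j(p))$ injects into the cokernel of $p$, which is zero. Thus $i(j(p))$ is an epimorphism in $\Pro(\cA)$. Since $i\colon\Pro(\cB)\to\Pro(\cA)$ is fully faithful and exact, and $\cB\subset\cA$ is closed under quotients so that the quotient in $\Pro(\cA)$ of a map between objects coming from $\Pro(\cB)$ again lies in $\Pro(\cB)$, one concludes that $j(p)$ itself is an epimorphism in $\Pro(\cB)$.

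\textbf{Alternative direct argument.} One can also argue straight from \eqref{eq:direct_definition_of_j}. Given $p\colon X\onto X'$, for each subobject $Y'\subset X'$ with $Y',X'/Y'\in\cB$, set $Y:=p^{-1}(Y')\subset X$. Then $X/Y\cong X'/Y'\in\cB$, and $Y$ sits in a short exact sequence $0\to \ker p\to Y\to Y'\to 0$; since $\ker p$ need not lie in $\cB$ we instead replace $Y$ by a suitable smaller subobject: using that each object of $\cA$ is an extension of two objects of $\cB$ together with Lemma \ref{lem:intersections}, one finds $Y''\subset Y$ with $Y''\in\cB$, $X/Y''\in\cB$ and $Y''\onto Y'$ (e.g. take $Y''$ to be the preimage under $Y\onto Y'$ of $Y'$ intersected with an appropriate $\cB$-subobject of $X$ of $\cB$-cokernel, using that $\ker p$ has finite length relative to $\cB$ hence $\ker p\cap Y''$ can be killed). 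Running over all such $Y'$ exhibits the pro-system $\{Y\}$ defining $j(X)$ as cofinally mapping \emph{onto} the pro-system $\{Y'\}$ defining $j(X')$, i.e. $j(p)$ is a levelwise epimorphism up to refinement, hence an epimorphism in $\Pro(\cB)$.

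\textbf{Main obstacle.} The subtle point in the direct argument is producing, for each $Y'\subset X'$, a subobject $Y\subset X$ with $Y\in\cB$, $X/Y\in\cB$, \emph{and} $p(Y)=Y'$ --- the naive preimage $p^{-1}(Y')$ has the right quotient but the wrong (not necessarily in $\cB$) structure because $\ker p\notin\cB$ in general. Controlling this requires invoking the hypothesis that every object of $\cA$ is an extension of two $\cB$-objects (so that $\ker p$ has a two-step $\cB$-filtration) together with the intersection lemma, to trim $p^{-1}(Y')$ down to a genuine object of the index category without losing surjectivity onto $Y'$. In the short-exact-sequence approach this difficulty is hidden inside the assertion that the cokernel of $i(j(p))$ embeds into $\coker(p)=0$, which is where right-exactness of $i^L$ and exactness of $i$ on pro-completions do the work; accordingly, I expect the cleanest writeup to be the short exact sequence one, with the snake lemma over $\Pro(\cA)$ as the only real computation.
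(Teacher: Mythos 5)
Your first approach follows the same outline as the paper's proof (apply the functorial sequence \eqref{eq:ses_defining_j} to the epimorphism $p$ and run the snake lemma in $\Pro(\cA)$), but the step you wave at is exactly where the content lies, and as stated it is false. For a map of short exact sequences, ``middle and right vertical maps epi $\Rightarrow$ left vertical map epi'' does not hold in general (e.g. map $0\to 0\to\Z\xto{\id}\Z\to 0$ to $0\to 2\Z\to\Z\to\Z/2\to 0$ with the identity in the middle), and correspondingly $\coker(i(j(p)))$ does \emph{not} automatically inject into $\coker(p)$: the snake lemma gives an exact sequence
\begin{equation*}
\ker(p)\to\ker\bigl(i(i^L(p))\bigr)\to\coker\bigl(i(j(p))\bigr)\to\coker(p)=0,
\end{equation*}
so the whole point is to prove that the first map is an epimorphism. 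The paper does this by factoring it as $\ker(p)\onto i(i^L(\ker p))\to\ker\bigl(i(i^L(p))\bigr)$ and using that $i^L$, being a left adjoint, is right exact, so that $i^L(\ker p)\onto\ker\bigl(i^L(X)\to i^L(X')\bigr)$. That is the one genuine computation in the proof, and it is missing from your write-up.

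Your ``alternative direct argument'' misidentifies the obstacle: you do not need to produce $Y\subset X$ with $Y,X/Y\in\cB$ and $p(Y)=Y'$ exactly. A map in $\Pro(\cB)$ is an epimorphism iff for every $W\in\cB$ (testing against $\cB$ suffices) the induced map $\colim_{Y'}\Hom(Y',W)\to\colim_{Y}\Hom(Y,W)$ is injective. If $\phi\colon Y'\to W$ dies in the target, then $\phi\circ p|_{Y_1}=0$ for some index $Y_1\subset p^{-1}(Y')$ of $j(X)$, hence $\phi$ vanishes on $p(Y_1)\subset Y'$; and $p(Y_1)$ is again a valid index for $j(X')$ simply because $\cB$ is closed under quotients ($p(Y_1)$ is a quotient of $Y_1$, and $X'/p(Y_1)$ is a quotient of $X/Y_1$). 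So $\phi$ already dies in the source, and $j(p)$ is epi --- with no use of the two-step filtration hypothesis and no need to ``trim'' $p^{-1}(Y')$; your proposed fix via ``finite length relative to $\cB$'' is both unnecessary and, as written, not a proof. Either route can be completed, but as submitted neither is.
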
 

\begin{proof}
Consider a short exact sequence
\begin{equation*}
0\to X\to Y\to Z\to 0
\end{equation*}
in $\cA.$ Applying \eqref{eq:ses_defining_j} to $Y$ and $Z,$ we obtain an exact sequence
\begin{equation*}
X\to \ker(i(i^L(Y))\to i(i^L(Z)))\to \coker(i(j(Y))\to i(j(Z)))\to 0.
\end{equation*}
The first morphism is a composition of epimorphisms
\begin{equation*}
X\to i(i^L(X))\to \ker(i(i^L(Y))\to i(i^L(Z))),
\end{equation*} 
hence it is also an epimorphism. This proves that $j(Y)\to j(Z)$ is an epimorphism.
\end{proof}

Consider the inclusion functor $\incl:\Fun^{\rex}(\cA,\Pro(\cB))\to \Fun^{\add}(\cA,\Pro(\cB)),$ and denote by $\incl^R$ its right adjoint, which is described in Corollary \ref{cor:making_a_functor_right_exact}. We define the right exact functor $\wt{j}:\cA\to \Pro(\cB)$ by $\wt{j}=\incl^R(j).$

\begin{remark}
Another description of the functor $\wt{j}$ is the following. Consider the left derived functor $\bL i^L:D^-(\Pro(\cA))\to D^-(\Pro(\cB)),$ which exists since $\Pro(\cA)$ has enough projective objects. Then for $X\in\cA$ we have an isomorphism
\begin{equation*}
i(\wt{j}(X))\cong H_0(\Fiber(X\to i(\bL i^L(X)))).
\end{equation*}
However, the former description of $\wt{j}$ will be more useful for us.
\end{remark}

We will need the following observation.

\begin{prop}\label{prop:j_tilde_to_j_is_epi}
For $X\in\cA,$ the natural morphism $\wt{j}(X)\to j(X)$ is an epimorphism.
\end{prop}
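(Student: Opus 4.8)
The plan is to test the asserted epimorphism against the objects of $\cB$, and thereby recast the statement as the separatedness of a presheaf on $\cA$ that is forced by Proposition~\ref{prop:j_preserves_epi}. The starting point is an elementary observation about $\Pro(\cB)$: a morphism $f\colon P\to Q$ in $\Pro(\cB)$ is an epimorphism as soon as, for every object $Z\in\cB$, the induced map $\Hom_{\Pro(\cB)}(Q,Z)\to\Hom_{\Pro(\cB)}(P,Z)$ is injective. Indeed, writing an arbitrary object of $\Pro(\cB)$ as $R=\proolim[k]R_k$ with $R_k\in\cB$, one has $\Hom_{\Pro(\cB)}(-,R)=\prolim[k]\Hom_{\Pro(\cB)}(-,R_k)$, so injectivity on each $R_k$ gives injectivity on $R$ as a limit of injections, whence $\coker(f)=0$. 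Applying this to the canonical morphism $\wt{j}(X)\to j(X)$, it is enough to show that for every $Z\in\cB$ the map
\begin{equation*}
\Hom_{\Pro(\cB)}(j(X),Z)\longrightarrow \Hom_{\Pro(\cB)}(\wt{j}(X),Z)
\end{equation*}
is injective.

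Next I would invoke Corollary~\ref{cor:making_a_functor_right_exact} with $F=j$ and with the chosen object $Z\in\cB$: it identifies the displayed map, viewed as a morphism of presheaves $\Hom_{\Pro(\cB)}(j(-),Z)\to\Hom_{\Pro(\cB)}(\wt{j}(-),Z)$ on $\cA$, with the sheafification for the single epi topology on $\cA$ of the presheaf $G_Z:=\Hom_{\Pro(\cB)}(j(-),Z)$. For an abelian-group-valued presheaf the sheafification unit is an objectwise monomorphism precisely when the presheaf is separated: the kernel of $G_Z\to G_Z^{\sharp}$ is a subpresheaf of $G_Z$ all of whose sections are locally zero, and this subpresheaf vanishes once $G_Z$ is separated (here one uses exactness of sheafification for the single epi topology, as in Proposition~\ref{prop:sheafification_of_additive_presheaves}). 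Thus the problem reduces to showing that the presheaf $G_Z$ is separated.

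Finally, since the single epi topology is generated by the coverings consisting of a single epimorphism, separatedness of $G_Z$ follows once $p^{*}\colon G_Z(U)\to G_Z(V)$ is injective for every epimorphism $p\colon V\onto U$ in $\cA$. But $p^{*}$ is exactly the map induced by $j(p)\colon j(V)\to j(U)$, and by Proposition~\ref{prop:j_preserves_epi} the morphism $j(p)$ is an epimorphism in $\Pro(\cB)$; hence $p^{*}$ is injective by the definition of an epimorphism. This establishes the separatedness of $G_Z$, and with it the Proposition.

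I do not expect a genuine obstacle: the argument is formal once Proposition~\ref{prop:j_preserves_epi} and Corollary~\ref{cor:making_a_functor_right_exact} are available. The one point deserving a little care is the standard fact that a separated presheaf injects into its sheafification --- equivalently, that the ``plus'' construction stabilises after a single step for separated presheaves --- which is precisely where exactness of sheafification is used.
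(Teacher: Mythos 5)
Your proposal is correct and follows essentially the same route as the paper: reduce to testing against objects $Z\in\cB$, identify $\Hom(\wt{j}(-),Z)$ as the sheafification of $\Hom(j(-),Z)$ via Corollary~\ref{cor:making_a_functor_right_exact}, and deduce injectivity from separatedness, which is exactly what Proposition~\ref{prop:j_preserves_epi} provides. You merely spell out two steps the paper leaves implicit (that epimorphisms in $\Pro(\cB)$ can be detected on objects of $\cB$, and that a separated presheaf injects into its sheafification), and both are handled correctly.
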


\begin{proof}
Take some object $Y\in \cB.$ By Proposition \ref{prop:j_preserves_epi}, the functor $\Hom(j(-),Y)$ is a separated presheaf on $\cA$ with respect to the single epi topology. Its sheafification is given by $\Hom(\wt{j}(-),Y)$ by Corollary \ref{cor:making_a_functor_right_exact}. Hence, we have a monomorphism of presheaves $\Hom(j(-),Y)\to \Hom(\wt{j}(-),Y).$ This exactly means that the morphism $\wt{j}(X)\to j(X)$ is an epimorphism for each $X\in\cA.$
\end{proof}

Now we define an abelian category, which will be in fact equivalent to the right abelian envelope $\cL\cH(\cE_{\cA,\cB}).$

{\noindent{\bf Construction.}} {\it We define the category $\cC_{\cA,\cB}$ as follows. Its objects are quadruples $(X,Y,u,v),$ where $X\in\cA,$ $Y\in\cB,$ $u:i(Y)\to X,$ $v:\wt{j}(X)\to Y,$ such that the following conditions hold:
\begin{enumerate}[label=(\roman*),ref=(\roman*)]
	\item The composition $\wt{j}(i(Y))\xto{\wt{j}(u)}\wt{j}(X)\xto{v}Y$ is zero.
	\item The following square commutes:
	\begin{equation}\label{eq:comm_square_for_u_and_v}
		\begin{CD}
			i(\wt{j}(X)) @>{i(v)}>> i(Y)\\
			@VVV @V{u}VV\\
			i(j(X)) @>>> X.
		\end{CD}
	\end{equation}
	Here the left vertical and the lower horizontal maps are the canonical ones. 
\end{enumerate}
The morphisms in $\cC_{\cA,\cB}$ are defined in the natural way.}

To give a feeling of the category $\cC_{\cA,\cB},$ we mention the following special case.

\begin{example}\label{ex:familiar_example}
Let $A$ be a right noetherian associative unital ring with a two-sided ideal $I\subset A$ such that $I^2=0.$ Consider the abelian category $\cA=\Coh\hy A$ of finitely generated right $A$-modules, and the full subcategory $\cB=\Coh\hy A/I\subset \cA.$ Then $\cA$ and $\cB$ satisfy the above conditions. The functors $j,\wt{j}:\cA\to\Pro(\cB)$ take values in the constant pro-objects and are given by
\begin{equation*}
j(M)=MI,\quad \wt{j}(M)=M\tens{A}I.
\end{equation*}
We have $\cC_{\cA,\cB}\simeq \Coh\hy D,$ where the (right noetherian) ring $D$ can be depicted as follows:
\begin{equation*}
D=\begin{pmatrix}
	A & I\\
	A/I & A/I
\end{pmatrix}.
\end{equation*}
This means that $D$ is the direct sum of matrix entries, and $D$ is equipped with the matrix multiplication. The categories $D^b(\Coh\hy D)$ and $\Perf(D)$ were used respectively in \cite[Section 8.1]{E20} and \cite[Section 5]{KL15} in the case when $A$ is commutative (this is a special case of a construction for a general nilpotent ideal).

The category $\Perf(D)$ was also used implicitly in \cite[Proofs of Theorem 2.25 and Corollary 3.5]{LT19}, when $A$ is not necessarily right noetherian. Namely, within the notation of loc.cit., this category is equivalent to the lax pullback:
\begin{equation}\label{eq:equiv_with_lax_pullback}
\Perf(D)\xto{\sim}\Perf(C(I,A))\laxtimes{\Perf(C(I,A/I))} \Perf(A/I).
\end{equation}
Here $C(I,A)$ and $C(I,A/I)$ are dg rings given by
\begin{equation*}
C(I,A)=\Cone(I\hto A),\quad C(I,A/I)=\Cone(I\xto{0}A/I),
\end{equation*}
and the map $C(I,A)\to C(I,A/I)$ is given by the identity on $I$ and by the projection on $A.$ The equivalence \eqref{eq:equiv_with_lax_pullback} is given on the generator by 
\begin{equation*}
D\mapsto (C(I,A),A/I,\id)\oplus (C(I,A),0,0).
\end{equation*}
\end{example}

We first summarize the basic properties of the category $\cC_{\cA,\cB}.$

\begin{prop}\label{prop:properties_of_C_AB}
\begin{enumerate}[label=(\roman*),ref=(\roman*)]
	\item The category $\cC_{\cA,\cB}$ is abelian. The forgetful functors 
	\begin{equation*}
	\cC_{\cA,\cB}\to\cA,\,(X,Y,u,v)\mapsto X,\quad\text{and}\quad \cC_{\cA,\cB}\to\cB,\,(X,Y,u,v)\mapsto Y,
	\end{equation*} are exact. \label{abelian_category}
	\item For any object $(X,Y,u,v)\in\cC_{\cA,\cB},$ the object $X/u(i(Y))$ is contained in $\cB.$ \label{cokernel_in_B}
	\item Consider an object $(X,Y)\in\cE_{\cA,\cB},$ and let $u:i(Y)\to X$ be the inclusion morphism. Then there exists a unique morphism $v:\wt{j}(X)\to Y$ such that the quadruple $(X,Y,u,v)$ is a well-defined object of $\cC_{\cA,\cB}.$ More precisely, $v$ is given by the composition of natural maps:
	\begin{equation}\label{eq:composition_defining_v}
	\wt{j}(X)\to j(X)=\proolim[\substack{Z\subset X,\\ Z\in\cB, X/Z\in\cB}]\to Y
	\end{equation}
	 This gives a fully faithful functor $\alpha:\cE_{\cA,\cB}\to \cC_{\cA,\cB}.$ Its essential image consists of objects $(X',Y',u',v')$ such that $u'$ is a monomorphism. \label{inclusion_of_E_AB}
	 \item We have a well-defined fully faithful functor $\beta:\cB\to\cC_{\cA,\cB},$ given by $\beta(X)=(0,X,0,0).$ We have a torsion pair in $\cC_{\cA,\cB},$ given by $(\beta(\cB),\alpha(\cE_{\cA,\cB})).$ \label{torsion_pair}
\end{enumerate}
\end{prop}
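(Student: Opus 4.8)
The plan is to establish the four assertions in order, each building on the previous. For part \ref{abelian_category}, I would exhibit $\cC_{\cA,\cB}$ as a full subcategory of an evidently abelian category — namely the category of quadruples $(X,Y,u,v)$ with $X\in\cA$, $Y\in\cB$, $u\colon i(Y)\to X$, $v\colon\wt{j}(X)\to Y$ arbitrary (this is a comma-type construction, hence abelian, since $\wt j$ is additive) — cut out by the two linear conditions (i) and (ii). The key point is that these conditions are closed under kernels and cokernels computed in the ambient category: since $\wt{j}$ is right exact, applying $\wt j$ to a kernel gives an epimorphism onto the kernel inside $\wt j$ of the ambient object, and the vanishing/commutativity conditions propagate. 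So I would check that if $\phi\colon(X_1,Y_1,u_1,v_1)\to(X_2,Y_2,u_2,v_2)$ is a morphism in $\cC_{\cA,\cB}$, then the ambient kernel and cokernel again satisfy (i) and (ii); the subtlety is only for the cokernel, where one uses right-exactness of $\wt j$ to see that $\wt j$ of the cokernel of $X_1\to X_2$ receives an epimorphism from $\wt j(X_2)$, and this forces conditions (i), (ii) on the quotient quadruple. Exactness of the two forgetful functors is then immediate since kernels and cokernels are computed componentwise in $X$ and $Y$.

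For part \ref{cokernel_in_B}, given $(X,Y,u,v)$, the commuting square \eqref{eq:comm_square_for_u_and_v} together with the fact (Proposition \ref{prop:j_tilde_to_j_is_epi}) that $\wt{j}(X)\onto j(X)$ is an epimorphism and the defining sequence \eqref{eq:ses_defining_j} shows that the image of $u$ contains the image of $i(j(X))\to X$, i.e.\ contains $i(j(X))$ as a subobject of $X$; hence $X/u(i(Y))$ is a quotient of $X/i(j(X))\cong i(i^L(X))$, and I would argue that $i^L(X)$, when evaluated appropriately, lands in $\cB$ — more precisely $X/i(j(X))$ is genuinely an object of $\cA$ (not just a pro-object) because \eqref{eq:ses_defining_j} realizes it as $i(i^L(X))$ with $i^L(X)$ a pro-object but the map $X\to i(i^L(X))$ factoring through the honest quotient $X/Y_0$ for any single $Y_0$ in the indexing system; since $\cB$ is closed under quotients, $X/u(i(Y))\in\cB$.

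For part \ref{inclusion_of_E_AB}, I would take $(X,Y)\in\cE_{\cA,\cB}$ with $u$ the inclusion, define $v$ as the composition \eqref{eq:composition_defining_v} through the term $Y$ of the pro-system defining $j(X)$, and verify conditions (i) and (ii) directly: (ii) holds because $u$ is literally the inclusion and the square commutes by construction of $v$ via $j(X)$; (i) holds because $\wt j(i(Y))\to\wt j(X)\to j(X)$ already lands in the sub-pro-object corresponding to $Y\subset X$ in a way that kills the composite — here one uses that $Y\cap Y = Y$, so the structure map to the $Y$-component is an isomorphism and the composite through $i(Y)$ first maps $Y$ isomorphically then quotients, giving zero; I would need Lemma \ref{lem:intersections} to see the indexing poset is filtered so these maps are well-defined. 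Uniqueness of $v$ follows since any other $v'$ satisfying (ii) must agree with $v$ after composing with the epimorphism $\wt j(X)\to j(X)$ — actually one must check $v$ is determined by (ii) using that $u$ is monic, which pins down $i(v)$ hence $v$. That $\alpha$ is fully faithful and that its essential image is exactly the quadruples with monic $u$ is then a matter of unwinding: given such a quadruple, $u$ monic means $(X, u(i(Y)))\in\cE_{\cA,\cB}$ by part \ref{cokernel_in_B}, and conditions (i)–(ii) force $v$ to be the canonical one.

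For part \ref{torsion_pair}: $\beta$ is clearly fully faithful with image the quadruples having $X=0$ (equivalently $u=0$ and $v=0$, since $\wt j(0)=0$). I would check $\Hom(\beta(\cB),\alpha(\cE_{\cA,\cB}))=0$: a morphism from $(0,X,0,0)$ to $(X',Y',u',v')$ with $u'$ monic is a map $X\to Y'$ compatible with structure maps, but the compatibility with $u$ forces the composite $i(X)\to i(Y')\xto{u'}X'$ to be zero through $u'$ monic... actually one checks instead that such a morphism is a pair $(0\to X', f\colon X\to Y')$ with the constraint that $f$ is compatible, and using that $\wt j(0)=0$ the relevant diagram forces $u'\circ i(f)=0$, and $u'$ monic gives $i(f)=0$ hence $f=0$. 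Conversely, for any $(X,Y,u,v)\in\cC_{\cA,\cB}$ I would produce the torsion short exact sequence
\begin{equation*}
0\to \beta(C)\to (X,Y,u,v)\to \alpha(X,u(i(Y)))\to 0,
\end{equation*}
where $C=\ker(v\colon \wt j(X)\to Y)$ pushed into $\cB$ appropriately — more precisely, the subobject $\beta(\cB)$-part is detected by taking $Y'' = \coker(\wt j(X)\xto{v} Y)$ giving $(0,Y'',0,0)$ as the torsion quotient... I need to arrange this so the quotient lies in $\alpha(\cE_{\cA,\cB})$, i.e.\ has monic $u$; the natural candidate is: image of the quadruple under "make $u$ monic", namely $(X, Y/\ker(\bar u), \bar u, \bar v)$ where $\bar u$ is the monomorphism induced by $u$. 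The main obstacle I anticipate is precisely this last verification — checking that the evident factorization $(X,Y,u,v)\to (X,\mathrm{im}\,u$-data$)$ is surjective in $\cC_{\cA,\cB}$ with kernel in $\beta(\cB)$, which requires knowing that conditions (i)–(ii) pass to this quotient, and that $\mathrm{coker}(i(Y)\xto{u}X)$ data really does define an object of $\alpha(\cE_{\cA,\cB})$; both reduce to the right-exactness of $\wt j$ and part \ref{cokernel_in_B}, so the difficulty is bookkeeping rather than conceptual. That the torsion pair is well-defined (every object sits in such a sequence, $\Hom$ vanishes one way) then completes \ref{torsion_pair}.
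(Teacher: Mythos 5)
Your proposal is correct and follows essentially the same route as the paper: componentwise (co)kernels via right exactness of $i$ and $\wt{j}$ for part (i), the square \eqref{eq:comm_square_for_u_and_v} together with Proposition \ref{prop:j_tilde_to_j_is_epi} for part (ii) and for the uniqueness of $v$ in part (iii), and the torsion sequence with subobject $\beta(\ker u)$ and quotient $(X,Y/\ker u,\bar{u},\bar{v})$ (your final candidate, after discarding the $\ker(v)$/$\coker(v)$ detour) for part (iv). The one spot worth tightening is your verification of condition (i) for $\alpha(X,Y)$: the clean statement is simply that $j(i(Y))=0$ for $Y\in\cB$ (the subobject $Z=0$ is cofinal in the pro-system), so the composite $\wt{j}(i(Y))\to\wt{j}(X)\to j(X)\to Y$ vanishes by naturality of $\wt{j}\to j$.
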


\begin{proof} \ref{abelian_category} follows directly from the right exactness of the functors $i:\cB\to\cA$ and $\wt{j}:\cA\to\Pro(\cB).$

To prove \ref{cokernel_in_B}, denote by $Y'\subset X$ the image of $u:i(Y)\to X,$ in particular $Y'\in B.$ In the commutative square \eqref{eq:comm_square_for_u_and_v} the left vertical arrow is an epimorphism by Proposition \ref{prop:j_tilde_to_j_is_epi}, hence the canonical morphism $i(j(X))\to X$ factors through $Y'.$ This exactly means that $X/Y'\in\cB,$ as required. 

We prove \ref{inclusion_of_E_AB}. Suppose that $v:\wt{j}(X)\to Y$ is some morphism satisfying the required conditions. As above, in the commutative square \eqref{eq:comm_square_for_u_and_v} the left vertical arrow is an epimorphism. Also, the right vertical arrow is a monomorphism by assumption. Hence, there exists a unique diagonal arrow $i(j(X))\to i(Y),$ making both triangle commutative. This shows that $v$ coincides with the composition \eqref{eq:composition_defining_v}. On the other hand, it is clear that this composition satisfies the required conditions (note that $j(Y)=0$). Fully faithfulness of the resulting functor $\alpha:\cE_{\cA,\cB}\to \cC_{\cA,\cB}$ follows directly. The description of the essential image of $\alpha$ follows using \ref{cokernel_in_B}.

It remains to prove \ref{torsion_pair}. The fully faithfulness of $\beta$ is clear, and so is the vanishing of $\Hom(\beta(Z),\alpha(X,Y))$ for $Z\in\cB,$ $(X,Y)\in\cE_{\cA,\cB}.$ The right adjoint $\beta^R$ to $\beta$ is given by $i(\beta^R(X,Y,u,v))=\ker(u).$ The adjunction counit  $\beta(\beta^R(X,Y,u,v))\to (X,Y,u,v)$ is a monomorphism, and its cokernel is contained in the essential image of $\alpha$ by \ref{inclusion_of_E_AB}. This proves \ref{torsion_pair}.
\end{proof}

We now describe the right abelian envelope of $\cE_{\cA,\cB}.$

\begin{prop}\label{prop:abelian_envelope}
The fully faithful exact inclusion $\alpha:\cE_{\cA,\cB}\to \cC_{\cA,\cB}$ induces an equivalence $\cL\cH(\cE_{\cA,\cB})\simeq \cC_{\cA,\cB}.$ 
\end{prop}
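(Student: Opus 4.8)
The plan is to apply the characterization of the right abelian envelope from Proposition \ref{prop:characterization_of_LH} to the fully faithful exact functor $\alpha:\cE_{\cA,\cB}\to\cC_{\cA,\cB}$. Thus I must verify the two hypotheses of that proposition: first, that the essential image $\alpha(\cE_{\cA,\cB})\subset\cC_{\cA,\cB}$ is closed under taking subobjects in $\cC_{\cA,\cB}$; second, that every object of $\cC_{\cA,\cB}$ admits an epimorphism from an object of the form $\alpha(X,Y)$. Since $\alpha$ is already known to be fully faithful and exact by Proposition \ref{prop:properties_of_C_AB}\ref{inclusion_of_E_AB}, these two points are all that remains, and the conclusion $\cL\cH(\cE_{\cA,\cB})\simeq\cC_{\cA,\cB}$ follows immediately.

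For the first point, I would use the explicit description of $\alpha(\cE_{\cA,\cB})$ from Proposition \ref{prop:properties_of_C_AB}\ref{inclusion_of_E_AB}: an object $(X,Y,u,v)$ lies in the essential image iff $u:i(Y)\to X$ is a monomorphism in $\cA$. Now if $(X',Y',u',v')\hookrightarrow (X,Y,u,v)$ is a subobject in $\cC_{\cA,\cB}$, then because the forgetful functor to $\cA$ and the forgetful functor to $\cB$ are exact (Proposition \ref{prop:properties_of_C_AB}\ref{abelian_category}), both $X'\to X$ and $Y'\to Y$ are monomorphisms. One then checks that $u'$ fits into a commutative square with $u$, $i(Y')\hookrightarrow i(Y)$ and $X'\hookrightarrow X$, and since $u\circ(i(Y')\hookrightarrow i(Y))$ is a monomorphism (composite of monos), $u'$ is a monomorphism as well. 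Hence $(X',Y',u',v')\in\alpha(\cE_{\cA,\cB})$, proving closure under subobjects. Equivalently, one can phrase this via the torsion pair of Proposition \ref{prop:properties_of_C_AB}\ref{torsion_pair}: $\alpha(\cE_{\cA,\cB})$ is the torsion-free class $\cF$ of a torsion pair, and torsion-free classes are always closed under subobjects.

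For the second point, given an arbitrary object $(X,Y,u,v)\in\cC_{\cA,\cB}$, I would construct an epimorphism onto it from an object in the essential image of $\alpha$. The natural candidate is built by ``resolving'' $u$: set $\tilde X = i(Y)\oplus X$, with the obvious inclusion $\tilde u:i(Y)\hookrightarrow\tilde X$ of the first summand, which is now a genuine monomorphism; equip $\tilde X$ with the appropriate map $\tilde v:\wt j(\tilde X)\to Y$ forced by Proposition \ref{prop:properties_of_C_AB}\ref{inclusion_of_E_AB} (after checking that $\tilde X\in\cA$ is an extension of two objects of $\cB$, which holds because $X$ is and $i(Y)=Y\in\cB$, using Lemma \ref{lem:intersections} to see the relevant quotient lands in $\cB$). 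The map $(\tilde X,Y,\tilde u,\tilde v)\to(X,Y,u,v)$ given by $(u,\id)_{\cA}$ on the first factor — more precisely the map $\tilde X = i(Y)\oplus X\xto{(u,\id)} X$ together with $\id_Y$ on the $\cB$-component — is then a morphism in $\cC_{\cA,\cB}$ (one verifies compatibility with the structure maps, using the commuting square \eqref{eq:comm_square_for_u_and_v} and condition (i) in the Construction), and it is an epimorphism because it is so after applying the exact conservative-enough forgetful functors to $\cA$ and $\cB$ — indeed $\tilde X\to X$ is a split epimorphism and $Y\to Y$ is the identity, and a morphism in $\cC_{\cA,\cB}$ that becomes epi in both $\cA$ and $\cB$ is epi (its cokernel, computed via the exact forgetful functors, vanishes).

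The main obstacle I anticipate is the second point: one must make sure $\tilde v$ is well-defined and that the constructed arrow is genuinely a morphism in $\cC_{\cA,\cB}$, i.e. that it respects conditions (i) and (ii) of the Construction. This is a diagram chase involving the three functors $i$, $j$, $\wt j$ and the natural transformations $\wt j\to j$, $i\circ j\to\id_{\cA}$; the functoriality of $\wt j$ and $j$ and Propositions \ref{prop:j_tilde_to_j_is_epi} and \ref{prop:j_preserves_epi} should make it go through, but it requires care. Once both hypotheses of Proposition \ref{prop:characterization_of_LH} are checked, the equivalence $\cL\cH(\cE_{\cA,\cB})\xto{\sim}\cC_{\cA,\cB}$ is automatic.
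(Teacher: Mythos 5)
Your first step --- closure of $\alpha(\cE_{\cA,\cB})$ under subobjects, via the torsion pair of Proposition \ref{prop:properties_of_C_AB} \ref{torsion_pair} --- is correct and is exactly how the paper argues. The gap is in your second step, and it is fatal rather than a matter of care: the object $(\tilde X,Y,\tilde u,\tilde v)$ with $\tilde X=i(Y)\oplus X$ and $\tilde u$ the inclusion of the first summand is in general not an object of $\cC_{\cA,\cB}$ at all, let alone one in the essential image of $\alpha$. The quickest way to see this is Proposition \ref{prop:properties_of_C_AB} \ref{cokernel_in_B}: every object $(X',Y',u',v')$ of $\cC_{\cA,\cB}$ satisfies $X'/u'(i(Y'))\in\cB$, whereas $\tilde X/\tilde u(i(Y))\cong X$, which lies in $\cB$ only in the degenerate case. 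Equivalently, $(\tilde X,Y)$ fails the defining condition $\tilde X/Y\in\cB$ of $\cE_{\cA,\cB}$. One can also see the obstruction directly in condition (ii) of the Construction: since $j(i(Y))=0$, the canonical map $i(j(\tilde X))\to\tilde X$ lands in the second summand $X$, while the composite $i(\wt j(\tilde X))\xto{i(\tilde v)} i(Y)\xto{\tilde u}\tilde X$ lands in the first; as $i(\wt j(\tilde X))\to i(j(\tilde X))$ is an epimorphism (Proposition \ref{prop:j_tilde_to_j_is_epi}), commutativity of \eqref{eq:comm_square_for_u_and_v} would force $i(j(X))\to X$ to vanish, i.e.\ $X\in\cB$. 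So no $\tilde v$ exists.

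The whole difficulty of the proposition is concentrated precisely here: you must produce an epimorphism onto $(X,Y,u,v)$ from a pair $(Z,Z')$ in which $Z'$ is an honest subobject of $Z$ with $Z/Z'\in\cB$, and the datum $v:\wt j(X)\to Y$ is a priori only a ``pro-level'' or sheafified morphism, not one induced by an actual subobject of $X$. The paper resolves this by using that $\Hom(\wt j(-),Y)$ is the sheafification of the separated presheaf $\Hom(j(-),Y)$ for the single epi topology: this yields an epimorphism $f:Z\onto X$ along which $v$ pulls back to a morphism $w:j(Z)\to Y$ represented by an actual $g:Z'\to Y$ with $Z'\subset Z$, $Z',Z/Z'\in\cB$; after shrinking $Z'$ one gets a morphism $\alpha(Z,Z')\to(X,Y,u,v)$, and adding the summand $\alpha(Y,Y)\to(X,Y,u,v)$ (needed to surject onto the $\cB$-component) gives the required epimorphism $\alpha(Z,Z')\oplus\alpha(Y,Y)\onto(X,Y,u,v)$. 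Your proposal would need to be replaced by an argument of this kind; any construction that leaves the $\cA$-component equal to (or containing) $X$ as a direct summand with $u$ split will run into the same obstruction.
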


\begin{proof}
By Proposition \ref{prop:properties_of_C_AB} \ref{torsion_pair} the category $\alpha(\cE_{\cA,\cB})$ is the full subcategory of torsion-free objects for a torsion pair in $\cC_{\cA,\cB}.$ Hence, by Proposition \ref{prop:characterization_of_LH} we only need to prove that any object of $\cC_{\cA,\cB}$ is a quotient of an object of $\alpha(\cE_{\cA,\cB}).$

Take some object $(X,Y,u,v)\in\cC_{\cA,\cB}.$ As above, put $Y'=\im(u:i(Y)\to X),$ so $Y'$ is an object of $\cB$ and the morphism $i(j(X))\to X$ factors through $Y'.$  Recall that the functor $\Hom(\wt{j}(-),Y)$ resp. $\Hom(\wt{j}(-),Y')$ is the sheafification of the separated presheaf $\Hom(j(-),Y)$ resp. $\Hom(j(-),Y')$ with respect to the single epi topology on $\cA.$ It follows that there exists an epimorphism $f:Z\to X$ and a morphism $w:j(Z)\to Y$ such that the following diagram commutes:
\begin{equation}\label{eq:two_comm_squares}
\begin{tikzcd}
\wt{j}(Z) \ar[d, "\wt{j}(f)"] \ar[r] & j(Z) \ar[d, "w"] \ar[r, "j(f)"] & j(X)\ar[d] \\
\wt{j}(X)\ar[r, "v"]   & Y \ar[r] & Y'
\end{tikzcd}
\end{equation}
Now the morphism $w:j(Z)\to Y$ is represented by some morphism $g:Z'\to Y,$ where $Z'\subset Z$ and $Z',Z/Z'\in\cB.$ Using the commutativity of \eqref{eq:two_comm_squares} and replacing $Z'$ by a smaller subobject if necessary (so that still $Z/Z'\in\cB$), we may assume that the following square commutes:
\begin{equation*}
\begin{CD}
i(Z') @>{i(g)}>> i(Y)\\
@VVV @V{u}VV\\
Z @>{f}>> X.
\end{CD}
\end{equation*}
By construction, we obtain a well-defined morphism $(f,g):\alpha(Z,Z')\to (X,Y,u,v).$ We also have a natural morphism $\alpha(Y,Y)\to (X,Y,u,v),$ and together these two morphisms define an epimorphism
\begin{equation*}
\alpha(Z,Z')\oplus\alpha(Y,Y)\to (X,Y,u,v).
\end{equation*}
This proves that the object $(X,Y,u,v)$ is a quotient of an object of $\alpha(\cE_{\cA,\cB}),$ as required. 
\end{proof}

Next, we observe that the category $\cA$ is naturally a Serre quotient of $\cC_{\cA,\cB}.$

\begin{prop}\label{prop:Serre_quotient}
The full subcategory $\beta(\cB)\subset \cC_{\cA,\cB}$ is a Serre subcategory, i.e. it is closed under taking subobjects, quotients and extensions. The functor $\pi:\cC_{\cA,\cB}\to \cA,$ $\pi(X,Y,u,v)=X,$ induces an equivalence $\bbar{\pi}:\cC_{\cA,\cB}/\beta(\cB)\xto{\sim} \cA.$
\end{prop}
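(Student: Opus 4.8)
The plan is to verify the two claims of Proposition \ref{prop:Serre_quotient} in sequence. First, to see that $\beta(\cB)\subset\cC_{\cA,\cB}$ is a Serre subcategory, I would use the explicit description of kernels and cokernels in $\cC_{\cA,\cB}$ coming from the exactness of the forgetful functors in Proposition \ref{prop:properties_of_C_AB}\ref{abelian_category}. An object $(X,Y,u,v)$ lies in $\beta(\cB)$ precisely when $X=0$; since the functor $\pi=(X,Y,u,v)\mapsto X$ is exact, the full preimage $\pi^{-1}(0)=\beta(\cB)$ is automatically closed under subobjects, quotients and extensions, which is exactly the Serre condition. (Equivalently: $\beta$ has the exact right adjoint $\beta^R$ with $i(\beta^R(X,Y,u,v))=\ker(u)$ from the proof of \ref{torsion_pair}, and also an obvious exact left adjoint $\beta^L$ with $\beta^L(X,Y,u,v)=\coker(i(\wt j(X))\xrightarrow{i(v)}i(Y))$ — wait, more simply, the first forgetful functor is exact with kernel $\beta(\cB)$, which suffices.)

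Next, I would analyze the induced exact functor $\bar\pi:\cC_{\cA,\cB}/\beta(\cB)\to\cA$. To show it is an equivalence one checks essential surjectivity and full faithfulness on the quotient. For essential surjectivity: given $X\in\cA$, choose a subobject $Y\subset X$ with $Y, X/Y\in\cB$ (possible by hypothesis, since $X$ is an extension of two objects of $\cB$), and note that $\alpha(X,Y)\in\cC_{\cA,\cB}$ satisfies $\pi(\alpha(X,Y))=X$. For full faithfulness, by the general theory of Serre quotients (e.g. Gabriel) it suffices to show that $\pi$ is essentially surjective, that every morphism in $\cA$ between objects of the form $\pi(C)$ lifts to $\cC_{\cA,\cB}$ up to objects with kernel and cokernel in $\beta(\cB)$, and that $\Hom_{\cC_{\cA,\cB}}(C,C')\to\Hom_\cA(\pi C,\pi C')$ becomes an isomorphism after passing to the filtered colimit over subobjects $C_0\subseteq C$ with $C/C_0\in\beta(\cB)$ and quotients $C'\twoheadrightarrow C'/C_0'$ with $C_0'\in\beta(\cB)$. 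The key point is that an object $C=(X,Y,u,v)$ is, up to a $\beta(\cB)$-subobject and a $\beta(\cB)$-quotient, determined by $X$ together with the data $u,v$, and modulo $\beta(\cB)$ this auxiliary data can be absorbed: given any $X\in\cA$ one can enlarge/shrink $Y$ (changing $C$ only by an object of $\beta(\cB)$) so as to match any prescribed morphism $X\to X'$, using that $\wt j$ and $j$ are functorial and $j(Y)=0$ for $Y\in\cB$.

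Concretely, I would argue full faithfulness as follows. Fix $C=(X,Y,u,v)$ and $C'=(X',Y',u',v')$. A morphism $\varphi:X\to X'$ in $\cA$ should be lifted; replacing $C$ by the subobject $\alpha(X, Y\cap Y'')$ for a suitable $Y''$ (which changes $C$ only by a subquotient in $\beta(\cB)$, since the first forgetful functor is exact and unchanged) and using that $\wt j(\varphi):\wt j(X)\to\wt j(X')$ together with the compatibility square \eqref{eq:comm_square_for_u_and_v} forces a compatible map on the $Y$-components after passing to a large enough subobject, one produces a genuine morphism in $\cC_{\cA,\cB}$ over $\varphi$. For faithfulness, if $\psi:C\to C'$ maps to $0$ in $\cA$, i.e. $\psi_X=0$, then the image of $\psi$ has zero $X$-component, hence lies in $\beta(\cB)$, so $\psi$ is zero in the quotient; and dually any roof representing a quotient-morphism that dies in $\cA$ can be refined to zero. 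This is the standard "kernel of $\pi$ equals $\beta(\cB)$, and $\pi$ is exact and essentially surjective, hence it is the Serre localization" packaging.

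The main obstacle I expect is the full faithfulness of $\bar\pi$: one must check that every morphism $\pi(C)\to\pi(C')$ in $\cA$ is, in the quotient category, uniquely realized by a roof $C\leftarrow \tilde C\to C'$ with $\tilde C\to C$ having kernel and cokernel in $\beta(\cB)$. The content is that the extra data $(Y,u,v)$ attached to $X$ carries no "information modulo $\beta(\cB)$": any two choices of $(Y,u,v)$ over the same $X$ differ by a subquotient in $\beta(\cB)$, which follows from Proposition \ref{prop:properties_of_C_AB}\ref{cokernel_in_B} (so $u(i(Y))$ always contains $i(j(X))$'s image, pinning $Y$ down up to a $\cB$-object) together with the uniqueness of $v$ in part \ref{inclusion_of_E_AB}. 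Once this rigidity is in place, the universal property of the Serre quotient applies verbatim and gives the equivalence $\bar\pi:\cC_{\cA,\cB}/\beta(\cB)\xrightarrow{\sim}\cA$.
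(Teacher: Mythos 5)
Your proposal follows essentially the same route as the paper: $\beta(\cB)$ is the kernel of the exact forgetful functor $\pi$, hence a Serre subcategory; essential surjectivity comes from choosing $Y\subset X$ with $Y,X/Y\in\cB$; and full faithfulness is verified on the colimit formula for Homs in the Serre quotient after reducing, via the torsion pair of Proposition \ref{prop:properties_of_C_AB}\ref{torsion_pair}, to objects of $\alpha(\cE_{\cA,\cB})$. The one step you leave implicit---the choice of the ``suitable'' subobject---is exactly where the paper takes $Y_1'=Y_1\cap f^{-1}(Y_2)$ and invokes Lemma \ref{lem:intersections} to see that $X_1/Y_1'\in\cB$; with that made explicit, your argument coincides with the paper's.
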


\begin{proof}
The first statement is evident. Next, we see that the functor $\pi$ is essentially surjective: for any $X\in\cA$ we can choose $Y\subset X$ such that $Y,X/Y\in\cB,$ and then $X\cong \pi(\alpha(X,Y)).$

It remains to prove the fully faithfulness of $\bbar{\pi}.$ By Proposition \ref{prop:properties_of_C_AB} \ref{torsion_pair}, each object of $\cC_{\cA,\cB}/\beta(\cB)$ is isomorphic to an image of an object of $\alpha(\cE_{\cA,\cB}).$ Take some objects $(X_1,Y_1),(X_2,Y_2)\in\cE_{\cA,\cB}.$ If $Z\subset\alpha(X_1,Y_1)$ is a subobject such that $\alpha(X_1,Y_1)/Z\in\beta(\cB),$ then $Z$ must be of the form $\alpha(X_1,Y_1')$ for some $Y_1'\subset Y_1$ such that $X_1/Y_1'\in\cB.$ On the other hand, any non-zero subobject of $\alpha(X_2,Y_2)$ is not contained in $\beta(\cB).$ We obtain
\begin{multline*}
\Hom_{\cC_{\cA,\cB}/\beta(\cB)}(\alpha(X_1,Y_1),\alpha(X_2,Y_2))=\indlim[\substack{Y_1'\subset Y_1,\\ X_1/Y_1'\in\cB}]\Hom_{\cC_{\cA,\cB}}(\alpha(X_1,Y_1'),\alpha(X_2,Y_2))\\
\xto{\sim} \Hom_{\cA}(X_1,X_2).
\end{multline*}
Here the second isomorphism follows from the following: for any $f:X_1\to X_2$ there exists $Y_1'\subset Y_1$ such that $X_1/Y_1'\in\cB$ and $f(Y_1')\subset Y_2.$ Namely, by Lemma \ref{lem:intersections} we can take $Y_1' = Y_1\cap f^{-1}(Y_2).$ This proves the proposition. 
\end{proof}

\section{Proofs of Theorem \ref{th:key_construction_intro} and \ref{th:Quillen}}

\begin{proof}[Proof of Theorem \ref{th:key_construction_intro}]
We prove \ref{SOD}. We first consider the exact functors between quasi-abelian categories
\begin{equation*}
\Phi_1:\cB\to\cE_{\cA,\cB},\,\Phi_1(X)=(X,0),\quad \Phi_2:\cB\to\cE_{\cA,\cB},\,\Phi_2(X)=(X,X).
\end{equation*} 
The left adjoint $\Phi_1^L$ to $\Phi_1$ and the right adjoint $\Phi_2^R$ to $\Phi_2$ are also exact functors, given by
\begin{equation*}
\Phi_1^L(X,Y)=X/Y,\quad \Phi_2^R(X,Y)=Y.
\end{equation*}
Hence, we have a functorial admissible short exact sequence
\begin{equation*}
0\to \Phi_2(\Phi_2^R(X,Y))\to (X,Y)\to \Phi_1(\Phi_1^L(X,Y))\to 0,\quad (X,Y)\in\cE_{\cA,\cB}.
\end{equation*}
We also have the vanishing $\Phi_2^R\circ \Phi_1=0$ and the isomorphisms $\Phi_1^L\circ\Phi_1\xto{\sim}\id,$ $\id\xto{\sim}\Phi_2^R\circ\Phi_2.$ Denoting by the same symbols the induced functors between the bounded derived categories, we see that the functors $\Phi_1:D^b(\cB)\to D^b(\cE_{\cA,\cB})$ and $\Phi_2:D^b(\cB)\to D^b(\cE_{\cA,\cB})$ are fully faithful and we have a semi-orthogonal decomposition
\begin{equation*}
D^b(\cE_{\cA,\cB})=\la \Phi_1(D^b(\cB)),\Phi_2(D^b(\cB))\ra.
\end{equation*}

Next, we prove \ref{ses}. By Proposition \ref{prop:Serre_quotient} and \cite[Lemma 1.15a]{Ke99} we have a short exact sequence of dg categories
\begin{equation}\label{eq:ses_via_C_AB}
0\to D^b_{\beta(\cB)}(\cC_{\cA,\cB})\to D^b(\cC_{\cA,\cB})\xto{\pi} D^b(\cA)\to 0. 
\end{equation}
Here the first category is the full subcategory of complexes whose homology is contained in the essential image of $\beta.$ By Proposition \ref{prop:abelian_envelope} we have an equivalence $D^b(\cE_{\cA,\cB})\simeq D^b(\cC_{\cA,\cB}).$ It induces an equivalence from \eqref{eq:key_ses} to \eqref{eq:ses_via_C_AB}. The assertion about the bounded $t$-structure on $\cT_{\cA,\cB}$ is clear: the composition $\cB\xto{\beta} D^b_{\beta(B)}(\cC_{\cA,\cB})\xto{\sim} \cT_{\cA,\cB}$ is given by
\begin{equation*}
X\mapsto \Cone((X,0)\to (X,X)).
\end{equation*}

Now \ref{functors_on_components} is straightforward. Namely, the isomorphisms $q\circ \Phi_1\cong q\circ \Phi_2\cong i$ follow by construction. The restriction of the composition $\Phi_1^L[-1]\circ\Psi$ to the heart $\cT_{\cA,\cB}^{\heartsuit}\simeq \cB$ is given by the natural inclusion $\cB\to D^b(\cB),$ and similarly for $\Phi_2^R\circ\Psi.$ It follows that both $\Phi_1^L[-1]\circ\Psi$ and $\Phi_2^R\circ\Psi$ are left inverses to the realization functor, as required. 
\end{proof}

In the proof of Theorem \ref{th:Quillen} below we avoid using the vanishing of $K_{-1}(\cT_{\cA,\cB})$ (which is known by Theorem \ref{th:key_construction_intro} \ref{ses} and \cite[Theorem 2.35]{AGH19}), because the statement of Theorem \ref{th:Quillen} for $n=0$ is elementary.

\begin{proof}[Proof of Theorem \ref{th:Quillen}] Let $\cA\supset\cB$ be as in Theorem \ref{th:Quillen}. By a simple argument we reduce to the case when each object of $\cA$ is an extension of two objects of $\cB.$ Indeed, for each $n\geq 1$ let $\cA_n\subset\cA$ be the full subcategory which consists of objects with a finite filtration with at most $n$ non-zero subquotients which are contained in $\cB.$ Then each $\cA_n$ is closed under taking subobjects and quotients, $\cA_1=\cB$ and $\bigcup\limits_{n}\cA_n=\cA.$ Since $K$-theory commutes with filtered colimits, we may assume that $\cA=\cA_n$ for some $n.$ Using induction we reduce to the case $\cA=\cA_2.$ We assume this from now on.

To show the isomorphism $K_0(\cB)\xto{\sim} K_0(\cA),$ we define the inverse map $\gamma:K_0(\cA)\to K_0(\cB)$ by putting $\gamma([X])=[Y]+[X/Y]$ for some $Y\subset X$ such that $Y,X/Y\in \cB.$ We only need to check that $\gamma$ is well-defined. To see this, first take another subobject $Y'\subset X$ such that $Y',X/Y'\in\cB.$ Then we have
\begin{multline*}
[Y]+[X/Y]=[Y\cap Y']+[Y/(Y\cap Y')]+[X/Y]=[Y\cap Y']+[X/(Y\cap Y')]\\
=[Y\cap Y']+[Y'/(Y\cap Y')]+[X/Y']=[Y']+[X/Y'].
\end{multline*} 
For a short exact sequence 
\begin{equation*}
0\to X'\xto{f} X\xto{g} X''\to 0
\end{equation*}
in $\cA,$ take some $Y\subset X$ such that $Y,X/Y\in\cB,$ and put $Y'=f^{-1}(Y)\subset X'$ and $Y''=g(Y)\subset X''.$ Then we have
\begin{equation*}
\gamma([X])-\gamma([X'])-\gamma([X''])=([Y]-[Y']+[Y''])+([X/Y]-[X'/Y']-[X''/Y''])=0.
\end{equation*}
Hence, $\gamma$ is well-defined and $K_0(\cB)\cong K_0(\cA).$

To prove the isomorphisms $K_n(\cB)\xto{\sim}K_n(\cA)$ for $n\geq 1$ we use the localization theorem for the short exact sequence \eqref{eq:key_ses}. By Theorem \ref{th:Barwick} the realization functor induces isomorphisms $K_n(\cB)=K_n(D^b(\cB))\xto{\sim} K_n(\cT_{\cA,\cB})$ for $n\geq 0.$ By Theorem \ref{th:key_construction_intro} \ref{functors_on_components}, for $n\geq 0$ the composition
\begin{equation*}
K_n(\cB)\cong K_n(\cT_{\cA,\cB})\to K_n(D^b(\cE_{\cA,\cB}))\cong K_n(\cB)\oplus K_n(\cB)
\end{equation*}
is given by $(-\id,\id).$ Therefore, the long exact sequence of $K$-groups for \eqref{eq:key_ses} implies the isomorphisms $K_n(\cB)\xto{\sim} K_n(\cA)$ for $n\geq 1.$
\end{proof}

\end{document}